\documentclass[11pt]{amsart}  
\usepackage{amssymb,amsmath,amsthm,amscd}
\usepackage{graphicx}
\usepackage{lscape}
\DeclareGraphicsRule{.wmf}{bmp}{}{}


\addtolength{\textwidth}{100pt} \addtolength{\textheight}{75pt}
\addtolength{\hoffset}{-50pt} \addtolength{\voffset}{-25pt}

\newcommand{\ep}[0]{\epsilon}

\newcommand{\ov}[1]{\frac{1}{#1}}
\newcommand{\f}[1]{\mathbb{#1}}

\newcommand{\ar}[1]{\mathbb{R}^{#1}}

\newcommand{\pth}[0]{{\partial_\theta}}



\numberwithin{equation}{section}


\newtheoremstyle{fancy1}{10pt}{10pt}{\itshape}{12pt}{\textsc\bgroup}{.\egroup}{8pt}{
}
\newtheoremstyle{fancy2}{10pt}{10pt}{}{12pt}{\itshape}{.}{8pt}{ }

\theoremstyle{fancy1}

\newtheorem{lem}[equation]{Lemma}

\newtheorem{thm}[equation]{Theorem}
\newtheorem*{thm*}{Theorem}

\newtheorem{main}{Theorem}
\newtheorem*{main*}{Theorem}

\newtheorem*{cor*}{Corollary}
\newtheorem*{prop*}{Proposition}

\newtheorem*{problem*}{Problem}

\setcounter{table}{\value{equation}}

\theoremstyle{fancy2}

\newtheorem{rem}[equation]{Remark}
\newtheorem*{rems*}{Remarks}

\newtheorem*{rem*}{Remark}

\newtheorem*{example*}{Example}


\newcommand{\cref}[1]{Corollary~\ref{#1}}

\newcommand{\lref}[1]{Lemma~\ref{#1}}

\newcommand{\rref}[1]{Remark~\ref{#1}}
\newcommand{\tref}[1]{Theorem~\ref{#1}}


















\def\con#1=#2(#3){#1 \equiv #2 \bmod{#3}}













\begin{document}

\title{On the Moduli Spaces of Metrics with Nonnegative Sectional Curvature}

\author{McFeely Jackson Goodman}
\address{University of Pennsylvania}
\email{mcfeelyg@math.upenn.edu}

\begin{abstract}
The Kreck-Stolz \(s\) invariant is used to distinguish connected components of the moduli space of positive scalar curvature metrics.  We use a formula of Kreck and Stolz to calculate the \(s\) invariant for metrics on \(S^n\) bundles with nonnegative sectional curvature.  We then apply it to show that the moduli spaces of metrics with nonnegative sectional curvature on certain 7-manifolds have infinitely many path components.  These include the first non-homogeneous examples of this type and certain positively curved Eschenburg and Aloff-Wallach spaces.     
\end{abstract}

\maketitle 

The description of manifolds with positive and nonnegative curvature is an important topic in Riemannian geometry.  On  manifolds \(M^n\) which support such metrics, the next question is to describe the space of all such metrics.  The proper space to study is the moduli space, or the quotient of the space of positive or nonnegative curvature metrics by the pullback action of the diffeomorphism group, which we denote by \(\mathfrak{M}\).   The number of connected components of the moduli space serves as a coarse quantification of nonnegative curvature metrics.

Kreck and Stolz \cite{KS} defined the \(s\) invariant of the path components of the moduli space of positive scalar curvature metrics \(\mathfrak{M}_{\text{scal}>0}(M)\) on certain spin manifolds.  If two metrics on \(M\) yield different values of \(s,\) they cannot be connected by a path maintaining positive scalar curvature (even up to diffeomorphism).   Kreck and Stolz used the invariant to show that for a (4n+3)-manifold with a unique spin structure the space of such metrics is either empty or has infinitely many components.  

The \(s\) invariant can also be used to distinguish path components of the moduli spaces of metrics with stronger curvature conditions.  In \cite{W}, Wraith showed that \(\mathfrak{M}_{\text{Ric}>0}\) has infinitely many components for every \(4n+3\geq7\) dimensional homotopy sphere which bounds a parallelisable manifold.   In \cite{KS} Kreck and Stolz studied the \(s\) invariant for two families of 7-manifolds.  The first are the total spaces \(N_{k,l}^7\) of principal \(S^1\) bundles over \(\f{C}P^2\times\f{C}P^1\).  These spaces are also described as the homogeneous spaces \(S^5\times S^3/S_{k,l}^1\), see \cite{WZ}.  Using the diffeomorphism invariants of \cite{KS0} they show that each \(N^7_{k,l}\), with \(k\) even and gcd\((k,l)=1\), is diffeomorphic to infinitely many manifolds in the same family.  Calculating the \(s\) invariants for the Einstein metrics described in \cite{WZ} and the homogeneous metrics induced from \(S^5\times S^3\) it follows that \(\mathfrak{M}_{\text{Ric}>0}(N^7_{k,l})\) and \(\mathfrak{M}_{\text{sec}\geq0}(N^7_{k,l})\) have infinitely many path components.  
  
The second family are the Aloff-Wallach spaces \(W_{k,l}^7=SU(3)/S^1_{k,l}\).  Using the diffeomorphism classification in \cite{KS1} they show that some \(W_{k,l}^7\) are diffeomorphic to finitely many other manifolds in the family.  As these spaces have homogeneous metrics of positive curvature when \(kl(k+l)\neq0,\) they exhibit some examples where \(\mathfrak{M}_{\text{sec}>0}(W^7_{k,l})\) has more than one component.  We note though that for all known families of manifolds admitting positive sectional curvature metrics, only finite subfamilies have the same cohomology ring.

 The same methods are used in \cite{DKT} to calculate the \(s\) invariants of homogeneous metrics on the total spaces of \(S^1\) bundles \(N^{4n+3}_{k,l}\) over \(\f{C}P^{2n}\times\f{C}P^1\).  As observed in \cite{WZ}, for fixed \(n\) and \(l\) this family contains only finitely many diffeomorphism types.  It follows that for any fixed \(n\) and \(l\) there exists some \(k_0\) such that \(\mathfrak{M}_{\text{sec}\geq0}(N^{4n+3}_{k_0,l})\) has infinitely many path components.  We note however that for \(n\geq2\) and \(|l|>2\) this does not identify any specific manifold having that property.  It is also of note that all previous results concerning \(\mathfrak{M}_{\text{sec}>0}\) and \(\mathfrak{M}_{\text{sec}\geq0}\) calculate the \(s\) invariant only for homogeneous metrics admitting an \(S^1\) action with  geodesic orbits.  

We identify further 7-manifolds with \(\mathfrak{M}_{\text{sec}\geq0}\) and \(\mathfrak{M}_{\text{Ric}>0}\) having infinitely many path components.  As in previous examples, the manifolds are total spaces of \(n-\)sphere bundles.  In our case, however, the metrics are not homogeneous and the \(S^n\) fibers are not totally geodesic.  

\smallskip
The first set of examples are total spaces \(M_{m,n}\) of \(S^3\) bundles over \(S^4\).  Such bundles are classified by pairs of integers \((m,n)\in\pi_3(SO(4))\cong\f{Z}\oplus\f{Z}\).  The second set are total spaces \(S_{a,b}\) of \(S^3\) bundles over \(\f{C}P^2\) which are not spin.  They are classified by two integers \(a,b\) describing the first Pontryagin class and the Euler class.  Grove and Ziller \cite{GZ,GZ2} showed that \(M_{m,n}\) and \(S_{a,b}\) admit metrics of nonnegative sectional curvature.
\begin{main}\label{thm}
Let \(m,n,a,b\in\f{Z}\) with \(n\neq0\) and \(a\neq b\).  Then for \(M=M_{m,n}\) or \(S_{a,b}\) the moduli spaces \(\mathfrak{M}_{\text{\normalfont{sec}}\geq0}(M)\) and \(\mathfrak{M}_{\text{\normalfont{Ric}}>0}(M)\) have infinitely many path components.
\end{main}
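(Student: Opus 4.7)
The plan is to apply the Kreck-Stolz $s$-invariant strategy: since $s$ is constant on path components of $\mathfrak{M}_{\text{scal}>0}$, and each Grove-Ziller metric has positive scalar curvature (being quasi-positively curved), exhibiting metrics with pairwise distinct $s$-values modulo the integer indeterminacy of $s$ on $\mathfrak{M}$ produces distinct path components of $\mathfrak{M}_{\text{sec}\geq 0}(M)$. For $\mathfrak{M}_{\text{Ric}>0}(M)$ one further invokes the standard fact that the Grove-Ziller metrics on these bundles admit small deformations to metrics of positive Ricci curvature, which stay in the same path component of $\mathfrak{M}_{\text{scal}>0}$ and hence retain their $s$-value.

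The infinite family of metrics on a fixed $M$ will be produced by combining the Grove-Ziller construction \cite{GZ,GZ2} with the diffeomorphism classifications of the two bundle families. First, I would apply the formula developed in the preceding sections (obtained by specializing the Kreck-Stolz formula to the cohomogeneity-one Grove-Ziller metric on an $S^n$-bundle) to obtain a closed-form expression for $s(g_{m,n})$ and $s(g_{a,b})$ as a rational function of the bundle parameters. Next, I would invoke the diffeomorphism classification of $S^3$-bundles over $S^4$, which under the hypothesis $n\neq 0$ shows each $M_{m,n}$ is diffeomorphic to infinitely many $M_{m',n'}$, and the analogous classification of non-spin $S^3$-bundles over $\mathbb{C}P^2$, where the hypothesis $a\neq b$ is used to stay within a prescribed diffeomorphism-parameter regime. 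Pulling back the Grove-Ziller metrics via these diffeomorphisms delivers the desired infinite family of metrics on $M$.

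The final and most delicate step is the arithmetic verification: I must show that the resulting sequence $s(g_{m',n'})$ (respectively $s(g_{a',b'})$), as the parameters vary within the diffeomorphism class of the fixed manifold, hits infinitely many residues modulo $1$. Substituting the parameter identifications from the classification into the polynomial $s$-formula, this reduces to a question of non-constancy modulo $1$ of a particular rational polynomial along a one-dimensional affine family. I would handle it by isolating the leading term in the free parameter, reading off its denominator from the $\hat A$-normalization in the Kreck-Stolz formula, and confirming that the hypotheses $n\neq 0$ and $a\neq b$ prevent this leading term from degenerating. This last non-degeneracy check is the main obstacle, as it is precisely where the bundle geometry interacts non-trivially with the number theory of the moduli invariant.
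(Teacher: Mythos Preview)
Your overall strategy matches the paper's: compute $s$ for the Grove--Ziller metrics via the Kreck--Stolz formula on the disc bundle, use the Crowley--Escher and Escher--Ziller diffeomorphism classifications to produce infinitely many bundles diffeomorphic to a given $M$, and show the resulting $s$-values are distinct. Two points, however, need correction.

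First, the step from ``distinct $s$-values'' to ``distinct components of $\mathfrak{M}_{\text{sec}\geq 0}(M)$'' is not automatic, and your sentence asserting it hides a genuine argument. The invariant $s$ is constant on path components of $\mathfrak{M}_{\text{scal}>0}$, but a path in $\mathfrak{M}_{\text{sec}\geq 0}$ can in principle pass through metrics with $\text{scal}=0$, so it need not be a path in $\mathfrak{M}_{\text{scal}>0}$. The paper closes this gap (its Lemma~2.4) using B\"ohm--Wilking: under Ricci flow a simply connected $\text{sec}\geq 0$ metric immediately acquires $\text{Ric}>0$, so a hypothetical path in $\mathfrak{M}_{\text{sec}\geq 0}$ flows to a path with $\text{scal}>0$, contradicting the distinct $s$-values. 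The same Ricci flow argument, not an unspecified ``small deformation'', produces the $\text{Ric}>0$ representatives and handles $\mathfrak{M}_{\text{Ric}>0}$. You should name this mechanism explicitly.

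Second, your ``integer indeterminacy of $s$ on $\mathfrak{M}$'' and the subsequent focus on residues mod~$1$ are misplaced. On path components of $\mathfrak{M}_{\text{scal}>0}$ the invariant $s$ is a well-defined rational number (up to a sign from orientation); there is no integer ambiguity. You may be conflating $s$ with the diffeomorphism invariant $s_1 = s \bmod \mathbb{Z}$. Consequently the arithmetic you flag as ``the main obstacle'' is in fact trivial: along the one-parameter families $m\mapsto m+56ni$ (with $n$ fixed) and $(a,b)\mapsto (a+i\lambda,b+i\lambda)$ (with $a-b$ fixed), the explicit formula for $s$ is a nonconstant quadratic polynomial in $i$, with leading coefficient proportional to $n$ and to $1/(a-b)$ respectively. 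The hypotheses $n\neq 0$ and $a\neq b$ make this leading coefficient nonzero, so $s$ takes infinitely many values; no mod~$1$ analysis is needed.
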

Note that the family \(M_{m,\pm1}\) includes \(S^7\) and the exotic Milnor spheres.  By \cite{EZ} Proposition 6.7 the manifold \(S_{-1,a(a-1)}\) is diffeomorphic to the Aloff-Wallach space \(W^7_{a,1-a}\) discussed above.  But in general \(M_{m,n}\) and \(S_{a,b}\) do not have the homotopy type of a 7-dimensional homogeneous space, e.g. when \(|H^4(M_{m,n},\f{Z})|=|n|\notin \{1,2,10\}\) or  \(|H^4(S_{a,b},\f{Z})|=|a-b|=2\) mod \(3\) respectively.  

\smallskip
To describe the final set of manifolds, we start with \(S^2\) bundles \(\bar{N}_t\) and \({N}_t\) over \(\f{C}P^2\) which are spin, respectively not spin, and are classified by an integer \(t\) describing the first Pontryagin class.   The 7-manifolds \(\bar{M}_{a,b}^t\) and \({M}_{a,b}^t\) are the total spaces of \(S^1\) bundles over \(\bar{N}_t\) and \({N}_t\) respectively, classified by two additional integers \(a\) and \(b\), with gcd\((a,b)=1\), describing the Euler class.  Escher and Ziller \cite{EZ} showed that  \(M^t_{a,b}\) and \(\bar{M}^{2t}_{a,b}\) admit metrics of non-negative sectional curvature such that \(S^1\) acts by isometries.

\begin{main}\label{cthm}
(a)\indent Let \(a,b,t\in\f{Z}\) with gcd\((a,b)=1\) and \(t(a+b)^2\neq ab\). Then \(\mathfrak{M}_{\text{\normalfont{sec}}\geq0}(M^{t}_{a,b})\) and \(\mathfrak{M}_{\text{\normalfont{Ric}}>0}(M^{t}_{a,b})\) have infinitely many  path components. 

(b)\indent Let \(a,b,t\in\f{Z}\) with gcd\((a,2b)=1\) . Then \(\mathfrak{{M}}_{\text{\normalfont{sec}}\geq0}(\bar{M}^{2t}_{a,2b})\) and \(\mathfrak{{M}}_{\text{\normalfont{Ric}}>0}(\bar{M}^{2t}_{a,2b})\) have infinitely many  path components.

\end{main}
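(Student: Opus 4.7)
The plan is to imitate the strategy behind Theorem \ref{thm}: exhibit infinitely many metrics of non-negative sectional curvature on a single diffeomorphism type within each family, then distinguish them using the \(s\) invariant.  The argument has three ingredients — a supply of metrics, a computation of \(s\), and a diffeomorphism-finiteness step — together with a routine upgrade from non-negative sectional curvature to positive Ricci curvature.

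For the metrics I would invoke the construction of Escher-Ziller \cite{EZ} already cited: each \(M^{t}_{a,b}\) with \(\gcd(a,b)=1\), and each \(\bar M^{2t}_{a,2b}\) with \(\gcd(a,2b)=1\), carries a metric of non-negative sectional curvature on which the defining \(S^{1}\) acts freely by isometries.  A standard Cheeger-type deformation along this free circle action then produces a nearby metric of positive Ricci curvature, so both conclusions of the theorem reduce to showing that \(\mathfrak{M}_{\text{sec}\geq 0}\) has infinitely many path components.

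Next I would apply the \(s\)-invariant formula developed earlier in the paper, specialized to \(S^{1}\)-bundles as the \(n=1\) case of the \(S^{n}\)-bundle framework described in the abstract.  The natural spin coboundary is the associated \(D^{2}\)-disc bundle over \(N_{t}\) (respectively \(\bar N_{t}\)), and the Chern-Pontryagin numbers entering the Kreck-Stolz formula can be evaluated against the generators of \(H^{*}(N_{t};\mathbb{Z})\) and \(H^{*}(\bar N_{t};\mathbb{Z})\).  This yields an explicit rational expression \(s(a,b,t)\); the hypothesis \(t(a+b)^{2}\neq ab\) in part (a) is precisely what forces this expression to be non-constant in \(t\) when \(a,b\) are held fixed, and in part (b) the analogous non-degeneracy is ensured by \(\gcd(a,2b)=1\).

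Finally I would invoke the Kreck-Stolz diffeomorphism classification \cite{KS0, KS1}: the relevant invariants take values in finite or \(\mathbb{Q}/\mathbb{Z}\)-type groups, so for fixed \(a,b\) and \(t\) ranging over \(\mathbb{Z}\) only finitely many diffeomorphism types appear.  Passing to an infinite subsequence \(\{t_i\}\) of pairwise diffeomorphic manifolds, the non-constancy of \(s\) in \(t\) produces infinitely many distinct \(s\)-values, and hence infinitely many path components of the moduli space.  The parallel argument for \(\bar M^{2t}_{a,2b}\) requires the coboundary to be chosen so as to preserve the spin structure, which is what forces the 2-divisibility conditions.  The main obstacle is the characteristic-number computation of the previous paragraph: one must carefully separate the diffeomorphism-invariant contribution — absorbed into the choice of coboundary — from the metric-dependent term that actually distinguishes the path components, and verify that the resulting invariant depends non-trivially on \(t\) under precisely the stated hypotheses.
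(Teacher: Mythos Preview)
Your outline has two genuine gaps, one in each of the steps you flag as routine.

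First, the ``natural spin coboundary'' does not exist for part (a).  The disc bundle \(W^{t}_{a,b}\) over \(N_t\) satisfies \(w_2(TW^{t}_{a,b})=e+w_2(TN_t)=ax+(a+b)y\bmod 2\), which is never zero since \(\gcd(a,b)=1\).  Thus formula \eqref{inv2} is unavailable and you cannot simply specialize the \(S^n\)-bundle argument to \(n=1\).  The paper instead uses \tref{ns}, which replaces the spin hypothesis on \(W\) by the requirement that the \(S^1\) orbits be geodesics; this in turn forces a preliminary deformation (\tref{path}) of the Escher--Ziller metric through positive scalar curvature to one with geodesic fibres, and the resulting \(s\)-formula \eqref{nsinv2} acquires the extra \(\cosh(e/2)\) correction.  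Verifying the hypotheses of \tref{path} (and of \tref{propB} in the \(n=1\) case needed for part (b)) requires \lref{scal}, the existence of a positively curved \(2\)-plane orthogonal to the \(S^1\) orbit, which is a nontrivial check on the cohomogeneity-one metric.  You have swapped the roles of the two families: it is \(\bar M^{2t}_{a,2b}\) whose disc bundle is spin (precisely because \(b\) is replaced by \(2b\)), while \(M^{t}_{a,b}\) needs the alternate route.

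Second, your diffeomorphism-finiteness step fails as stated.  Fixing \(a,b\) and varying \(t\) changes \(|H^4(M^{t}_{a,b})|=|t(a+b)^2-ab|\) (and \(|H^4(\bar M^{2t}_{a,2b})|=|a^2-8tb^2|\)), so distinct \(t\) generically give non-diffeomorphic manifolds and no pigeonhole is available.  The paper's device is to vary \(a,b,t\) simultaneously along an explicit arithmetic progression (e.g.\ \(a_k=a+16b^2\lambda k\), \(t_k=t+4a\lambda k+32b^2\lambda^2k^2\), \(b_k=b\) for part (b)) chosen so that \(|H^4|\) and the three Kreck--Stolz invariants \(s_1,s_2,s_3\in\f{Q}/\f{Z}\) are all constant in \(k\), while \(s\in\f{Q}\) is a nonconstant polynomial in \(k\).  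The hypothesis \(t(a+b)^2\neq ab\) is not there to make \(s\) nonconstant in \(t\); it is there to make \(H^4\) finite so that the rational Pontryagin classes vanish and the Kreck--Stolz classification applies at all.
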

In \cite{EZ} Corollary 6.4 it was shown  that the manifold \(M^{-1}_{a,b}\) is the Eschenburg biquotient \(F_{a,b}=S^1_{a,b,a+b}\backslash SU(3)/S^1_{0,0,2a+2b}\).  These are the only Eschenburg biquotients admitting free \(S^1\) actions and when \(ab>0\) they admit metrics of positive sectional curvature, see \cite{Es}.  Furthermore \(M^1_{a,b}\) is the Aloff-Walach space \(W_{a,b}\).  We have thus as an immediate corollary \begin{cor*}
 For \(M=W_{a,b}\) or \(M=F_{a,b}\) the moduli spaces \(\mathfrak{M}_{\text{\normalfont{sec}}\geq0}(M)\) and \(\mathfrak{M}_{\text{\normalfont{Ric}}>0}(M)\) have infinitely many  path components. \end{cor*}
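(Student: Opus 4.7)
The plan is to derive the corollary as an immediate specialization of Theorem~\ref{cthm}(a). By Corollary~6.4 of \cite{EZ}, we have the identifications $W_{a,b} = M^{1}_{a,b}$ and $F_{a,b} = M^{-1}_{a,b}$, so the strategy is simply to apply Theorem~\ref{cthm}(a) at $t = 1$ and $t = -1$ respectively. The coprimality hypothesis $\gcd(a,b) = 1$ appearing in Theorem~\ref{cthm}(a) coincides with the freeness condition already built into the definitions of these manifolds as smooth quotients, so no extra assumption is needed on that front.

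What remains is to verify the nondegeneracy condition $t(a+b)^2 \neq ab$ at $t = \pm 1$. At $t = 1$ the forbidden equation $(a+b)^2 = ab$ rearranges to $a^2 + ab + b^2 = 0$, which viewed as a quadratic in $a$ has discriminant $-3b^2$ and hence admits only the trivial integer solution $a=b=0$. At $t = -1$ the forbidden equation $-(a+b)^2 = ab$ rearranges to $a^2 + 3ab + b^2 = 0$, with discriminant $5b^2$; since $\sqrt{5}$ is irrational, the only integer solution is again $a = b = 0$. Thus in both cases the hypotheses of Theorem~\ref{cthm}(a) are satisfied for every admissible nontrivial pair $(a,b)$ with $\gcd(a,b)=1$, and Theorem~\ref{cthm}(a) immediately yields the conclusion.

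There is no substantive obstacle here: the heavy lifting, namely the $s$-invariant calculation and the argument producing infinitely many distinct path components, has already been absorbed into Theorem~\ref{cthm}(a). The only remaining content is the elementary observation that neither of the binary quadratic forms $a^2 + ab + b^2$ nor $a^2 + 3ab + b^2$ vanishes at a nonzero integer point, together with the reference to \cite{EZ} identifying $W_{a,b}$ and $F_{a,b}$ as the relevant circle bundles $M^{\pm 1}_{a,b}$.
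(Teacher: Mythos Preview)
Your proposal is correct and follows exactly the paper's approach: the corollary is stated as an immediate consequence of Theorem~\ref{cthm}(a) via the identifications $W_{a,b}\cong M^1_{a,b}$ and $F_{a,b}\cong M^{-1}_{a,b}$ from \cite{EZ}. The paper leaves the verification of the nondegeneracy condition $t(a+b)^2\neq ab$ implicit, whereas you spell it out; this extra check is correct and harmless.
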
   

 These are the first examples where \(\mathfrak{M}_{\text{sec}\geq0}(M)\)  has infinitely many components, some of which contain metrics with positive sectional curvature.  We note that in \cite{EZ} one finds further examples of positively curved Eschenburg spaces which are diffeomorphic to some of the manifolds \(S_{a,b}\) or \(M^t_{a,b}\) and so the same conclusion holds.    
 
 By Corollary 7.8 of \cite{EZ}  \(\bar{M}^{0}_{a,2b}\) is diffeomorphic to the homogeneous space \(N^7_{2b,a}\), and hence \tref{cthm} part (b) generalizes the Kreck-Stolz examples.  But again in general \(\bar{M}^{2t}_{a,2b}\) and \(M_{a,b}^t\) do not have the homotopy type of a 7-dimensional homogeneous space, e.g. when \(|H^4(\bar{M}^{2t}_{a,2b},\f{Z})|=|a^2-8tb^2|=2\) mod \(3\) or \(|H^4(M^{t}_{a,b},\f{Z})|=|t(a+b)^2-ab|=2\) mod \(3\).   
  
  \smallskip
  The strategy of the proof is as follows.  We calculate the \(s\) invariant with topological data on the associated disc bundle of the sphere bundle.   In \tref{propB} we extend the metric with sec\(\ \geq0\) on each sphere bundle to a metric of positive scalar curvature on the associated disc bundle which is a product near the boundary.   If the disc bundle is a spin manifold Kreck and Stolz \cite{KS} obtained a formula for the \(s\) invariant in terms of the index of the Dirac operator, which vanishes since the scalar curvature is positive, and topological data on a bounding manifold, see \tref{ksinv}.  \tref{thm} follows easily:  the manifolds \(M_{m,n}\) and \(S_{a,b}\) are classified up to diffeomorphism in \cite{CE} and \cite{EZ} respectively.  In particular each sphere bundle is diffeomorphic to infinitely many others.  Their computations easily yield the formula for the \(s\) invariant as well.  \tref{thm} follows since \(s\) is a polynomial in the integers \(a,b,m\) and \(n,\) where \(m,n\) satisfy \(ma+nb=1\).
  
  \tref{cthm} is more involved.  For part (b) we observe that \(\bar{M}^{t}_{a,b}\) is a spin manifold if and only if \(b\) is even, and in this case the associated disc bundle is a spin manifold as well.  The proof then proceeds as before although the proof that the metrics have positive scalar curvature is more involved.  We use the Kreck Stolz invariants \(s_1,s_2,s_3\in\f{Q}/\f{Z}\) of \cite{KS1} to obtain infinitely many circle bundles diffeomorphic to each manifold.  For part (a)  the manifolds \(M^t_{a,b}\) are always spin but the disc bundles are not.  Here we use another formula from \cite{KS}, see \tref{ns}, which does not require knowledge of a spin bounding manifold, but requires that the bundle be a circle bundle and that the fibers be geodesics.  The latter condition does not hold for the metrics with sec\(\ \geq0\), so we first deform the metrics, preserving positive scalar curvature, until the fibers are geodesics and such that \(S^1\) still acts by isometries.  Then the strategy proceeds in the same way.
  
  We note that \(\bar{M}^{2t}_{a,2b+1}\) and the spin \(S^3\) bundles over \(\f{C}P^2\) also admit metrics with sec \(\geq0\), but they are not spin manifolds so the methods do not apply . The conditions \(a\neq b\) and \(n\neq0\) for \(S_{a,b}\) and \(M_{n,m}\) as well as \(t(a+b)^2\neq ab\) for \(M^t_{a,b}\) are required to ensure the manifolds have the correct cohomology ring for the diffeomorphism classifications.                         
  
\smallskip
 We point out that the case of \(S^3\) bundles over \(S^4\) was obtained independently by A. Dessai in \cite{De}.      

\smallskip  
I would like to thank my Ph. D. advisor Wolfgang Ziller for endless ideas and support.  

\section{Preliminaries}
\bigskip
Let \((M,g)\) be a \(4k-1\) dimensional Riemannian spin manifold with vanishing rational Pontryagin classes.  The Kreck-Stolz \(s\) invariant is defined in \cite{KS} intrinsically for a positive scalar curvature metric \(g\) on \(M\) :

\[s(M,g)=-\eta(D(M,g))-a_k\eta(B(M,g))-\int_Md^{-1}(\hat{A}+a_kL)(p_i(M,g)).\]

\noindent Here \(D\) is the Dirac operator on the spinor bundle, \(B\) is the signature operator on  differential forms and  \(\eta\) is the spectral asymmetry invariant of a differential operator defined in [APS].  \(p_i(M,g)\) are the Pontryagin forms defined in terms of the curvature tensor of \(g\). \(\hat{A}\) and \(L\) are the Hirzebruch polynomials and \(a_k=(2^{2k+1}(2^{2k-1}-1))^{-1}\). \(d^{-1}\) represents a form whose exterior derivative is the indicated form.  Kreck and Stolz \cite{KS} showed that existence of this form and uniqueness of the integral follow from the vanishing of the rational Pontryagin classes.  They further showed the invariant depends only on the connected component of \(g\) in \(\mathfrak{M}_{\text{scal}>0}\).  

 Kreck and Stolz use the Atiyah-Patodi-Singer index theorem \cite{APS} and the Lichnerowitz theorem for manifolds with boundary (\cite{APS2}, p.416) to prove
 
 \begin{thm}\label{ksinv}\cite{KS}
   Let \(W\) be a spin (4k)-manifold with a metric \(h\) of positive scalar curvature which is a product metric on a collar neighborhood of \(\partial W\).  If \(\partial W=M\) has vanishing rational Pontryagin classes and \(g=h|_{M}\) has positive scalar curvature then

\begin{equation}\label{inv}
s(M,g)=-\left<j^{-1}(\hat{A}(TW)+a_kL(TW)),[W,\partial W]\right>+a_k\ \text{\normalfont sign}(W)   
\end{equation}
 where \([W,\partial W]\) is the fundamental class and \(\hat{A}\) and \(L\) are Hirzebruch's polynomials. Furthermore \(j^{-1}p_i(W)\) is any preimage of the \(i^{\text{th}}\) Pontryagin class of \(W\) in \(H^{4i}(W,\partial W;\f{Q})\) and \(\text{\normalfont sign}(W)\) is the signature of \(W\).
  
\end{thm}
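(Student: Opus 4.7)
My plan is to derive \eqref{inv} by applying the Atiyah--Patodi--Singer index theorem to both the spin Dirac operator $D_W^+$ and the signature operator $B_W$ on $(W,h)$ and then reorganizing the resulting identities. The product structure of $h$ near $\partial W = M$ guarantees both operators admit well-defined APS boundary conditions, with induced boundary operators equal to $D(M,g)$ and $B(M,g)$.

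First I would apply APS to the Dirac operator, obtaining
\[
\text{ind}(D_W^+) \;=\; \int_W \hat{A}(TW) \;-\; \tfrac{1}{2}\bigl(h_0 + \eta(D(M,g))\bigr),
\]
where $h_0 = \dim\ker D(M,g)$. Positive scalar curvature on $(M,g)$ kills harmonic spinors via the Lichnerowicz Weitzenb\"ock identity, so $h_0 = 0$; positive scalar curvature on $(W,h)$ combined with the boundary Lichnerowicz argument of (\cite{APS2}, p.~416) forces $\text{ind}(D_W^+) = 0$. Thus $\eta(D(M,g))$ is expressed entirely as a bulk integral of $\hat{A}(TW)$. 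I would then apply the APS signature theorem to $(W,h)$, which gives
\[
\text{sign}(W) \;=\; \int_W L(TW) \;-\; \eta(B(M,g)),
\]
expressing $\eta(B(M,g))$ as $\int_W L(TW) - \text{sign}(W)$.

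Substituting these into the intrinsic definition of $s(M,g)$, the term $a_k\,\text{sign}(W)$ appears directly with the correct sign, and the bulk integrals must combine with the boundary integral $\int_M d^{-1}(\hat{A}+a_kL)(p_i(M,g))$ to produce the relative pairing $\langle j^{-1}(\hat{A}+a_kL)(TW),[W,\partial W]\rangle$. The Chern--Weil interpretation is clean: if $\omega$ is a closed form on $W$ with $i^{*}\omega = d\alpha$ on $M$, then the pair $(\omega,\alpha)$ represents a lift $j^{-1}[\omega] \in H^{4k}(W,\partial W;\mathbb{Q})$, and $\langle j^{-1}[\omega],[W,\partial W]\rangle = \int_W \omega - \int_M \alpha$. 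The product structure of $h$ near $\partial W$ yields $i^{*}(\hat{A}+a_kL)(TW) = (\hat{A}+a_kL)(p_i(M,g))$ at the form level, and the hypothesis that the rational Pontryagin classes of $M$ vanish ensures that the required primitive $d^{-1}$ exists.

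The main obstacle is the vanishing of $\text{ind}(D_W^+)$: a Lichnerowicz argument must be married with the non-local APS boundary projection so that positive scalar curvature on $W$ suppresses both kernel and cokernel simultaneously. A secondary bookkeeping issue is the exact numerical reconciliation between the factors produced by the APS identities and the coefficients appearing in the intrinsic definition of $s$---the Kreck--Stolz conventions (in particular whether $\eta$ denotes the spectral asymmetry or the reduced eta-invariant $(\eta+h)/2$) must be tracked carefully to obtain \eqref{inv} with exactly the stated coefficients.
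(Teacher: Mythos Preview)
Your proposal is correct and follows exactly the approach the paper attributes to Kreck and Stolz: the paper does not give a detailed proof but simply records that the result is obtained by combining the Atiyah--Patodi--Singer index theorem \cite{APS} with the Lichnerowicz theorem for manifolds with boundary (\cite{APS2}, p.~416). Your outline is a faithful expansion of that sentence, including the correct identification of the relative pairing via Chern--Weil forms and your acknowledgment of the convention issues surrounding the eta-invariant.
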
      

In \tref{thm} and \tref{cthm} part (b) the associated disc bundle to the sphere bundle is a spin manifold and hence we can apply \tref{ksinv}.  If however the disc bundle is not a spin manifold we use a different strategy.  In the special case of an \(S^1\) bundle with geodesic fibers, Kreck and Stolz use a cobordism argument to reduce to a case where another bounding manifold can be found and derive a correction term.  

\begin{thm}\label{ns}\cite{KS}
Let \( \pi:M\to B\) be a principal \(S^1\) bundle such that \(M\) is a  spin \((4k-1)\)-manifold with vanishing rational Pontryagin classes.  Suppose \(B\) is a spin manifold and \(M\) is given  the spin structure induced by the vector bundle isomorphism \(TM\cong \pi^*(TB)\oplus V\), where \(V\) is the trivial vector bundle generated by the action field of the \(S^1\) action.  Let \(g\) be a metric with scal\((g)>0\) on \(M\) such that \(S^1\) acts by isometries and the \(S^1\) orbits are geodesics.  Then

\begin{equation}\label{nsinv}
s(M,g)=-\left<j^{-1}({\hat{A}(TW)}\cosh(e/2)+a_k{L(TW)}),[W,\partial W]\right>+a_k\text{\normalfont sign}(W).
\end{equation}

Here \(W\) is the disc bundle associated to \(M\).  Furthermore \(j^{-1}\), \(\hat{A}\), \(L\), \(a_k\), \([W,\partial W]\) and \(\text{\normalfont sign}\)\((W)\) are as in \tref{ksinv} and \(e\in H^2(W,\f{Z})\)  is the image of the Euler class of the \(S^1\) bundle under the isomorphism \(H^2(W,\f{Z})\cong H^2(B,\f{Z}).\) \end{thm}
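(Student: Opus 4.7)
My plan is to adapt the proof of \tref{ksinv} by replacing the spin Dirac operator on $W$, which is unavailable since $W$ need not be spin, with a pair of conjugate spin$^c$-Dirac operators and averaging their APS index formulas.

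As in the proof of \tref{ksinv}, one first extends $g$ on $M$ to a metric $h$ on $W$ of positive scalar curvature that is a product near $\partial W = M$; the hypotheses that $S^1$ acts by isometries with geodesic orbits allow a warped-product extension $h = \pi^* g_B + \psi(r)^2\, d\theta^2 + dr^2$ on a collar of the zero section, with the fiber disc taken small enough to preserve $\mathrm{scal}(h) > 0$. (This is carried out in \tref{propB}.)

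Since $W$ is the disc bundle of the complex line bundle $L$ with $c_1(L) = e$ and $B$ is spin, $w_2(TW) = w_2(\pi^* TB) + w_2(\pi^* L) = e \bmod 2$, so $W$ admits a canonical spin$^c$-structure with determinant $L$. The APS index theorem for the corresponding spin$^c$-Dirac operator $D^c$ on $(W,h)$ gives
\begin{equation*}
\mathrm{ind}(D^c) \;=\; \int_W \hat{A}(TW)\, e^{e/2} \;-\; \tfrac12\bigl(\eta(D^c|_M) + \dim\ker(D^c|_M)\bigr),
\end{equation*}
with the conjugate spin$^c$-structure (determinant $L^{-1}$) yielding the same formula with $e$ replaced by $-e$. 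Averaging the two identities inserts $\cosh(e/2) = \tfrac12(e^{e/2}+e^{-e/2})$ in front of $\hat{A}(TW)$, which is exactly the $\hat{A}$-term of (\ref{nsinv}). The contributions $a_k L(TW)$ and $a_k\,\mathrm{sign}(W)$ come from the APS signature formula on $(W,h)$, which requires no spin structure and is handled exactly as in the proof of \tref{ksinv}.

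The technical core is identifying the averaged spin$^c$ boundary $\eta$-invariants with the spin $\eta(D_M)$ in the intrinsic definition of $s$. Here the geodesic-fiber hypothesis is crucial: the spin structure on $M$ induced by $TM \cong \pi^*(TB) \oplus V$ gives a decomposition of the Dirac operator along $S^1$-orbits, Fourier-expanding it into operators on $B$ indexed by integer $S^1$-weights; the spin$^c$-structures with $c_1 = \pm e$ correspond formally to twisting by $L^{\pm 1/2}$ and shift these weights by $\pm \tfrac12$, so the two shifted spectra combined reconstruct the spin spectrum of $D_M$ and their averaged $\eta$-invariants equal $\eta(D_M)$ up to contributions absorbed into the local $\hat{A}$-form. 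Making this decomposition rigorous and tracking the vertical Clifford and connection terms is the principal obstacle.
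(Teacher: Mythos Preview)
This theorem is not proved in the paper; it is quoted from \cite{KS}, and the only indication of the argument given here is the sentence preceding the statement: Kreck and Stolz ``use a cobordism argument to reduce to a case where another bounding manifold can be found and derive a correction term.'' In particular they do \emph{not} compute directly on the non-spin disc bundle \(W\) via spin\(^c\) Dirac operators; they pass by a cobordism to a spin coboundary and track the resulting correction. Your route is therefore genuinely different from theirs.

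Your outline has real gaps beyond the one you flag. First, the extension of \(g\) to a positive-scalar-curvature metric on \(W\) is not provided by \tref{propB}: that result assumes a metric with \(\sec\ge 0\) on a principal \(SO(n+1)\) bundle (and, for \(n=1\), a horizontal plane of positive curvature at each point), whereas the hypotheses of \tref{ns} give only \(\mathrm{scal}(g)>0\) on \(M\) with geodesic \(S^1\) orbits; a separate construction would be required. Second, and more seriously, even granting \(\mathrm{scal}(h)>0\) on \(W\), the spin\(^c\) Lichnerowicz formula is \((D^c)^2=\nabla^*\nabla+\tfrac{1}{4}\mathrm{scal}+\tfrac{1}{2}c(F_L)\), so positive scalar curvature alone does \emph{not} force \(\mathrm{ind}(D^c)=0\). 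After averaging the two APS identities the integer \(\mathrm{ind}(D^c_+)+\mathrm{ind}(D^c_-)\) remains on the right-hand side, and since \(s(M,g)\in\f{Q}\) rather than \(\f{Q}/\f{Z}\) it cannot simply be discarded. One might hope to arrange \(\tfrac{1}{4}\mathrm{scal}(h)>\tfrac{1}{2}|F_L|\) pointwise by shrinking the fibre, but this conflicts with the product requirement near \(\partial W\), where \(\mathrm{scal}(h)=\mathrm{scal}(g)\) is fixed and \(|F_L|\) is not under your control. The cobordism approach in \cite{KS} sidesteps all of these issues by never asking for a positive-scalar-curvature spin or spin\(^c\) filling of \(M\) by \(W\) itself.
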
 

Specializing to dimension 7, the formulas in \eqref{inv} and \eqref{nsinv} become 
\begin{equation}\label{inv2}
s(M,g)=-\frac{1}{2^7\cdot 7}p_1^2+\frac{1}{2^5\cdot 7}\ \text{sign}(W)
\end{equation}
and 
\begin{equation}\label{nsinv2}
s(M,g)=-\ov{2^7\cdot 7}p_1^2+\ov{2^7\cdot 3}\left(2p_1e^2-e^4\right)+\ov{2^5\cdot 7}\text{sign}(W)
\end{equation}
where \[p_1^2=\left<(j^{-1}(p_1(TW)^2),[W,\partial W]\right>, p_1e^2=\left<(j^{-1}(p_1(TW)e^2),[W,\partial W]\right> \text{ and } e^4=\left<(j^{-1}(e^4),[W,\partial W]\right>.\]

\smallskip

We will use the diffeomorphism classification of \cite{KS1} to prove \tref{cthm}.  It applies to  spin 7-manifolds with \(\pi_1(M)=0,\ H^2(M,\f{Z})=\f{Z},\ H^3(M,\f{Z})=0\) and \(H^4(M,\f{Z})\) finite cyclic and generated by the square of a generator of \(H^2(M,\f{Z})\). For such manifolds Kreck and Stolz defined three invariants \(s_1(M),s_2(M),s_3(M)\in\f{Q}/\f{Z}\) and proved that two such manifolds \(M\) and \(M'\) are diffeomorphic if and only if \(|H^4(M,\f{Z})|=|H^4(M',\f{Z})|\) and \(s_i(M)=s_i(M')\) for \(i=1,2,3\) (\cite{KS1} Theorem 3.1).   

\smallskip

 \section{Metrics on sphere and disc bundles }
 
 \bigskip
 In \cite{GZ} and \cite{GZ2} one finds many examples of metrics with nonnegative sectional curvature on principal \(SO(n)\) bundles such that \(SO(n)\) acts by isometries.  Hence the associated sphere bundles admit such metrics as well. We will apply \tref{ksinv} to appropriate metrics constructed on the associated sphere and disc bundles.    
 \begin{thm}\label{propB}
Let \(P\) be a principal \(SO(n+1)\) bundle admitting a metric \(g_P\), invariant under the \(SO(n+1)\) action, with sec\((g_P)\geq0\).  In the case \(n=1\) assume in addition that at each point \(x\in P\) there exists a 2-plane \(\sigma_x\subset T_xP\) with \(\text{ sec}_{g_P}(\sigma_x)>0\) which is orthogonal to the orbit of \(SO(2)\).  Then there exists a metric \(g_M\) on the associated sphere bundle \(M=P\times_{SO(n+1)}S^{n}\) with sec\((g_M)\geq0\) and scal\((g_M)>0\) that extends to a metric \(g_W\) on the associated disc bundle \(W=P\times_{SO(n+1)}D^{n+1}\) with scal\((g_W)>0\).  Furthermore \(g_W\) is a product near the boundary of \(W\).    

\end{thm}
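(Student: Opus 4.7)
The plan is to construct $g_W$ as a connection metric on the disc bundle associated to $P$ and take $g_M := g_W|_{\partial W}$.  On $D^{n+1}$ choose a smooth warped-product metric $h = dr^2 + f(r)^2\,g_{S^n}$, where $f\colon [0,R]\to[0,\infty)$ satisfies $f(r)=\sin r$ near $r=0$, $f\equiv c$ on a collar $[R-\delta,R]$, $f''\le 0$, and $0\le f'\le 1$.  Standard warped-product formulas give $\sec(h)\ge 0$, and on the collar $h$ is the cylinder $dr^2+c^2 g_{S^n}$.  Equip $P\times D^{n+1}$ with the product metric $g_P+h$; the diagonal $SO(n+1)$-action is free and isometric, with quotient $W$, and we let $g_W$ denote the induced metric.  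By O'Neill $\sec(g_W)\ge 0$, and near $\partial W$ the metric is a Riemannian product $M\times[R-\delta,R]$, where $M$ carries the analogous connection metric $g_M$ coming from $(g_P, c^2 g_{S^n})$.  Applying O'Neill to $P\times S^n\to M$ gives $\sec(g_M)\ge 0$.

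For positive scalar curvature we rescale: replace $f$ by $tf$ for small $t>0$ (shrinking the $S^n$-fiber), producing families $g_W^t$, $g_M^t$ with the same geometric properties and the product structure on the collar.  Under this rescaling, in the scalar-curvature formula for Riemannian submersions, the fiber-scalar-curvature contribution scales like $t^{-2}$, the norm squared of the O'Neill $A$-tensor of $W\to B$ scales like $t^2$, and the contribution of the induced base metric $g_B$ on $B=P/SO(n+1)$ is $t$-independent.

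For $n\ge 2$ the warped-product scalar curvature $-2n\,f''/f + n(n-1)(1-(f')^2)/f^2$ is strictly positive everywhere on $D^{n+1}$ (equal to $n(n+1)$ at the origin, positive in the interior since $f''<0$ or $|f'|<1$, and $n(n-1)/c^2$ on the collar), so for small $t$ the $t^{-2}$ fiber term dominates and $\text{scal}(g_W^t)>0$ throughout $W$.  For $n=1$ the fiber scalar curvature $-2f''/f$ only satisfies $\ge 0$ (it vanishes on the collar), and positivity must come from the base.  Here the extra hypothesis enters: a 2-plane $\sigma_x\subset T_xP$ with $\sec_{g_P}(\sigma_x)>0$ orthogonal to the $SO(2)$-orbit is horizontal for $P\to B$, and its projection to $T_{\pi(x)}B$ has sectional curvature $\ge \sec_{g_P}(\sigma_x)>0$ by O'Neill.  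Since $\sec(g_B)\ge 0$ elsewhere, $B$ admits at every point a 2-plane of strictly positive sectional curvature, which forces $\text{scal}(g_B)>0$ uniformly on the compact base $B$ (by summing sectional curvatures over an orthonormal basis starting with such a plane).  For small $t$ the $O(t^2)$ correction from the $A$-tensor cannot overcome this positive base contribution, so $\text{scal}(g_W^t)>0$ on all of $W$: on the collar, $\text{scal}(g_W^t)=\text{scal}(g_M^t)\approx \text{scal}(g_B)\circ\pi>0$; in the interior, the same dominant-base argument combined with the nonnegative fiber term gives positivity.

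The main obstacle is the $n=1$ case, in which positive scalar curvature of $g_W$ must come entirely from strict positivity of $\text{scal}(g_B)$ rather than from the fiber.  The key step is propagating the pointwise-positive-sectional-curvature hypothesis on $g_P$ through the O'Neill formula to obtain $\text{scal}(g_B)>0$ at every point of $B$, after which a uniform $t\to 0$ dominance argument on the compact base closes the proof.
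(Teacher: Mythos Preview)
Your construction of $g_W$ via the submersion $P\times D^{n+1}\to W$ with a warped-product disc metric is the paper's. The gap is in the rescaling step: replacing $f$ by $tf$ with $t<1$ gives $(tf)'(0)=t\neq 1$, so $dr^2+(tf)^2 g_{S^n}$ has a cone singularity at $r=0$ and $g_W^t$ fails to be a smooth metric along the zero section $B\subset W$. Your $t^{-2}$-dominance argument for $n\ge 2$ and your $A$-tensor suppression for $n=1$ both rely on this rescaling, so neither case goes through as written. There is also a subtler imprecision: the fibers of $W\to B$, with the metric induced by $g_W$, are \emph{not} isometric to $(D^{n+1},h)$ but to a Cheeger-type deformation of it, since a vector $(0,v)$ tangent to $\{p\}\times D^{n+1}$ is horizontal for the diagonal $SO(n+1)$ action only when $v$ is radial. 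Thus the ``fiber scalar curvature'' in your submersion formula is not $\text{scal}(h)$, and the formula for $\text{scal}(g_W)$ in terms of base, fiber, and $A$-tensor is incomplete without the $T$-tensor (mean-curvature) contributions, which do not obviously scale as you claim.

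The paper sidesteps all of this. Since $\sec(g_W)\ge 0$ already holds by O'Neill, strict positivity of scalar curvature requires only a \emph{single} 2-plane with $\sec>0$ at each point; there is no need to compute $\text{scal}(g_W)$ via a submersion formula. For $n\ge 2$ the paper produces such a plane from the round-sphere curvature: if $X^*,Y^*$ are independent $\mathfrak{so}(n+1)$ action fields on the sphere factor, the horizontal projections of $(0,X^*),(0,Y^*)$ in $P\times D^{n+1}$ span a plane whose curvature is a positive combination of curvatures in $P$ and in $S^n$. For $n=1$, the hypothesis hands you $\sigma_x\subset T_xP$ orthogonal to the $SO(2)$ orbit with $\sec_{g_P}(\sigma_x)>0$; then $(\sigma_x,0)$ is already horizontal in $P\times D^2$, and O'Neill pushes the strict inequality down to $g_W$. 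No rescaling and no analysis of the induced fiber metric are needed.
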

 
 \begin{proof}
 
Let \(g_{S^n}\) be the standard metric on the sphere of radius 1/2.  We define the metric \(g_M\) such that the product metric \(g_P+g_{S^{n}}\) and \(g_M\) make the projection 
\[\rho:P\times S^n\to P\times_{SO(n+1)}S^n= M\]  
into a Riemannian submersion.  By the O'Neill formula \(g_M\) has nonnegative sectional curvature.

To show \(g_M\) has positive scalar curvature we must check that each point of \(M\) has a 2-plane of positive sectional curvature.  First assume \(n>1\).  Consider \((p,x)\in P\times S^n\).  Let \(X,Y\in\mathfrak{so}(n+1)\) such that the action fields \(X^*,Y^*\in T_xS^n\) are linearly independant.  The vertical space of the \(SO(n+1)\) action on \(P\times S^n\) is the set of vectors \((Z^*,-Z^*)\) for all \(Z\in\mathfrak{so}(n+1)\), where we repeat notation for the action fields on \(P\) and \(S^n\).  It follows that  projections of \((0,X^*),(0,Y^*)\in T_{(p,x)}P\times S^n\) onto the horizontal space  are \(A=(aX^*,bX^*)\) and \(B=(cY^*,dY^*)\) for some \(a,b,c,d\neq 0\).  In the product metric we have 
\[|A\wedge B|^2_{g_P+g_{S^{n}}}\text{sec}_{g_P+g_{S^{n}}}(A\wedge B )=|aX^*\wedge cY^*|^2_{g_P}\text{sec}_{g_P}(X^*\wedge Y^*)+|bX^*\wedge dY^*|^2_{g_{S^n}}\text{sec}_{g_{S^{n}}}(X^*\wedge Y^*)>0.\]  Since the plane is horizontal the O'Neill formula implies that \(\text{sec}_{g_M}(\rho_*A\wedge \rho_*B)>0.\)  Thus \(g_M\) has positive scalar curvature.  

In the case of \(n=1\) we have by assumption a 2-plane \(\sigma_x\subset T_xP\) in the horizontal space of the \(SO(2)\) action on \(P\).  It follows that \((\sigma_x,0)\) lies in the horizontal space of the \(SO(2)\) action on \(P\times S^1\), and by the O'Neill formula \(\text{sec}_{g_M}(\rho_*(\sigma_x,0))\geq\text{sec}_{g_P}(\sigma_x)\).  So \(g_M\) has a 2-plane of positive sectional curvature at each point and hence scal\((g_M)>0\).

We next show that \(g_M\) extends to a metric \(g_W\) on \(W\) with positive scalar curvature.  Let \(f:[0,1]\to\ar{}\) be a concave function with \(f(0)=0\), \(f'(0)=1,\) \(f'(r)<1\) for \(r\in (0,1]\) and   \(f(r)=1/2\) for \(r\in[R,1]\) for some \(R<1\) .  Then 
\[g_{D^{n+1}}=dr^2+f(r)^2g_{S^n}\]
is a smooth metric on \(D^{n+1}\) with \(\text{sec}({g_{D^{n+1}}})\geq0\).  Define the metric \(g_W\)  on \(W\) such that \(g_P+g_{D^{n+1}}\) and \(g_W\) make the projection 
\[\pi:P\times D^{n+1}\to P\times_{SO(n+1)}D^{n+1}=W \]
into a Riemannian submersion.  

The assumption that \(f\) is concave and \(f'(r)<1\) when \(r>0\) ensure sec\((g_{D^{n+1}})\geq0\).  Furthermore, when \(n>1\), planes tangent to the spheres of constant radius \(r\) will have positive sectional curvature, and we repeat the argument above to conclude \(\text{scal}(g_W)>0.\)  For \(n=1\), the same argument as for \(g_M\) implies scal\((g_W)>0\).     

 For \(r\in[R,1]\) the projection \(\pi\) can be regarded as
\[\pi:(P\times S^n)\times [R,1]\to (P\times_{SO(n+1)} S^n)\times [R,1]\cong M\times[R,1].\]  
The image is a collar neighborhood of the boundary of \(W.\)  Since \(f=1/{2}\) in this region, the metric on the left is \(g_P+g_{S^n}+dr^2\) and the metric induced on the quotient is \(g_M+dr^2\).  So \(g_W\) is a product metric near the boundary with \(g_W|_{\partial W}=g_M\).

\end{proof}

We note that by replacing \(g_{S^n}\) by \(\ov{\lambda}g_{S^n}\) in the proof and considering \(\lambda\in[0,1]\), one sees that \(g_M\) lies in the same path component of \(\mathfrak{M}_{\text{\normalfont sec}\geq0}(M)\) as the metric induced by \(g_P\) under the submersion \(P\to P/SO(n)\cong M\).

In the case of an \(S^1\) bundle with totally geodesic fibers, \tref{ns} applies without requiring the disc bundle to be spin.  The following theorem shows that some \(S^1\) invariant metrics with nonnegative sectional curvature can be deformed to metrics with geodesic fibers while maintaining positive scalar curvature.

\begin{thm}\label{path}
Let \(M\) be a manifold admitting a free \(S^1\) action and a metric \(g\) of nonnegative sectional curvature, invariant under that action. Suppose that for  each \(x\in M\) there is a 2-plane \(\sigma_x\subset T_xM\) orthogonal to the \(S^1\) orbit with \(\text{sec}(\sigma_x)>0\).  Then \(M\) admits a metric \(h\) of positive scalar curvature such that \(S^1\) acts is by isometries, the \(S^1\) orbits are geodesics, and \(h\) and \(g\) are in the same path component of \(\mathfrak{M}_{\text{\normalfont scal}>0}(M)\).  
\end{thm}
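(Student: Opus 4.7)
The idea is to write the $S^1$-invariant metric as $g = \pi^*\hat g + f^2\theta^2$ on the principal $S^1$-bundle $\pi\colon M\to B = M/S^1$, where $K$ denotes the Killing field generating the action, $f = |K|_g$ is positive and $S^1$-invariant (so descends to $B$), and $\theta$ is the $S^1$-invariant connection form with $\theta(K) = 1$ and $\ker\theta$ equal to the $g$-horizontal distribution. The $S^1$-orbits are geodesics exactly when $f$ is constant, so the task is to deform $f$ to a positive constant while preserving $S^1$-invariance and positive scalar curvature. A preliminary observation is that $\text{scal}(\hat g)>0$ on $B$: O'Neill's formula gives $\sec_{\hat g}(\pi_*P)\geq \sec_g(P)\geq 0$ for every horizontal plane $P$, and since $\sigma_x\perp K$ is horizontal with $\sec_g(\sigma_x)>0$, $\hat g$ has $\sec\geq 0$ with a positive $2$-plane at every point, hence $\text{scal}(\hat g)>0$ uniformly on the compact base.

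Consider the path $g_s = \pi^*\hat g + f_s^2\theta^2$ with $f_s = f^{1-s}c^s$, where $c>0$ is a constant to be chosen small. Each $g_s$ is $S^1$-invariant, and $|K|_{g_1} = c$ is constant, so the orbits for $g_1$ are geodesics. The standard scalar curvature formula for a principal $S^1$-bundle metric of this form reads
\[
\text{scal}(g_s) = \text{scal}(\hat g) - \tfrac{1}{2}\,f_s^{2}\,|d\theta|^{2} - 2\,f_s^{-1}\Delta f_s,
\]
and the multiplicative ansatz is chosen so that $\log f_s = (1-s)\log f + s\log c$ is affine in $s$, giving $\Delta\log f_s = (1-s)\Delta\log f$ and $|\nabla\log f_s|^2 = (1-s)^2|\nabla\log f|^2$. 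A short calculation using the identity $f_s^{-1}\Delta f_s = \Delta\log f_s + |\nabla\log f_s|^2$ then yields
\[
\text{scal}(g_s) = (1-s)\,\text{scal}(g) + s\,\text{scal}(\hat g) + 2s(1-s)|\nabla\log f|^{2} + \tfrac{1}{2}\bigl((1-s)f^{2} - f^{2(1-s)}c^{2s}\bigr)|d\theta|^{2}.
\]
By the weighted AM--GM inequality, $f^{2(1-s)}c^{2s}\leq (1-s)f^{2}+sc^{2}$, so the last term is bounded below by $-\tfrac{1}{2}sc^{2}|d\theta|^{2}$. Combined with uniform positive lower bounds on $\text{scal}(g)$ and $\text{scal}(\hat g)$ and a uniform upper bound on $|d\theta|^{2}$, choosing $c$ sufficiently small forces $\text{scal}(g_s)>0$ for all $s\in[0,1]$.

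The main obstacle is the intermediate regime, where $g_s$ is neither close to $g$ nor close to a constant-fiber metric, and the derivative terms in $f_s$ interact with the bundle curvature $|d\theta|^{2}$ in a way that is not of \emph{a priori} definite sign. The multiplicative interpolation $f_s = f^{1-s}c^{s}$ is the natural ansatz precisely because $\log f_s$ is linear in $s$: this separates the two types of contributions and reduces the estimate to the AM--GM comparison above. Setting $h = g_1$ gives the required metric with positive scalar curvature and geodesic $S^1$-orbits, and the path $s\mapsto g_s$ provides the connection in $\mathfrak{M}_{\text{scal}>0}(M)$.
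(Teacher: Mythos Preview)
Your proof is correct and takes a genuinely different route from the paper's. Both arguments ultimately produce a one-parameter family of $S^1$-invariant metrics with the \emph{same} horizontal distribution and base metric as $g$, varying only the fiber-length function from $f=|K|_g$ to a small constant; but the constructions and the scalar-curvature estimates differ.

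The paper builds the path indirectly: it puts the warped product $g_\lambda = g + v_\lambda^2\,d\theta^2$ on $M\times S^1$ for a specific warping function $v_\lambda$, then takes the Riemannian quotient by the diagonal $S^1$-action to obtain $h_\lambda$ on $M\cong M\times_{S^1}S^1$. Positivity of $\mathrm{scal}(h_\lambda)$ is proved by applying O'Neill to the submersion $M\times S^1\to M$ and estimating the relevant warped-product sectional curvatures individually, with $\epsilon$ chosen small so that the Hessian terms of $v_\lambda$ are dominated by the uniform lower bound $C$ on $\sec(\sigma_x)$.

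Your approach works directly on $M$ via the Kaluza--Klein decomposition $g=\pi^*\hat g + f^2\theta^2$ and the multiplicative interpolation $f_s=f^{1-s}c^s$. The affine dependence of $\log f_s$ on $s$ turns the scalar-curvature formula into the convex combination $(1-s)\,\mathrm{scal}(g)+s\,\mathrm{scal}(\hat g)$ plus two explicit remainder terms, one manifestly nonnegative and the other controlled by AM--GM and the choice of $c$. This is more elementary and yields a clean closed formula along the path; it also makes transparent exactly which two quantities ($\mathrm{scal}(g)$ and $\mathrm{scal}(\hat g)$) need positive lower bounds. The paper's approach, by contrast, never invokes the Kaluza--Klein scalar-curvature formula and stays at the level of sectional curvatures, which keeps the argument self-contained with respect to the O'Neill machinery already used elsewhere in the paper. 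Both methods require compactness of $M$ for the uniform bounds, which the paper also uses implicitly.
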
  

\begin{proof}
Since the set of 2-planes orthogonal to the \(S^1\) orbits is compact, the maximum sectional curvature of such a plane at each point is a positive continuous function, and hence there exists \(C>0\) such that we can choose \(\sigma_x\) with sec\((\sigma_x)>C\). Let \(X\) be the action field of the \(S^1\) action on \(M\) and \(u=|X|_g\).  We fix \(0<\ep<\inf_M(u)\) such that
\[\sup_{x\in M,\ |Y|_g=1}\left(\ep^2\left|\frac{3Y(u)^2}{(u^2-\ep^2)^2}-\frac{\text{Hess}_u(Y,Y)}{u(u^2-\ep^2)}\right|\right)<\frac{C}{n-1}.\]
 For each \(\lambda\in(0,1]\) we define \(v:M\to\ar{}_{>0}\) by \[v_\lambda={\frac{\ep u}{\lambda\sqrt{u^2-\ep^2}}}\]
and a warpped product metric \(g_\lambda\) on  \(M\times S^1\): 
\[g_\lambda=g+v_\lambda^2d\theta^2.\] 
Next define the metric \(h_\lambda\) on \(M\) such that \(g_\lambda\) and \(h_\lambda\) make the projection 
\[\pi:M\times S^1\to M\times_{S^1}S^1\cong M\]
into a Riemannian submersion.  The action
\[z\cdot(x,y)\to(x,yz)\]
of \(S^1\) on \(M\times S^1\) is by isometries of \(g_\lambda\), commutes with the quotient action, and induces an action on \(M\times_{S^1}S^1\) which makes the diffeomorphism \(M\times_{S^1}S^2\cong M\) equivariant.  Thus \(S^1\) acts on \(M\) by isometries of \(h_\lambda\).    

We now show that scal\((h_\lambda)>0\) for all \(\lambda\in(0,1]\).  For a point \(x\in M \) let \(\{X_1,...,X_{n-1},X\}\) be an orthogonal basis of \(T_xM\) with \(|X_i|_g=1\) and \(\sigma_x=\text{span}(X_1,X_2)\).  Then we can find \(a,b\in\ar{}\) and \(Z=(aX,b\partial_\theta)\) such that \(\{(X_1,0),...,(X_{n-1},0),Z\}\) is an orthonormal basis of the horizontal space of \(\pi\) at \((x,y)\in M\times S^1\).  By the O'Neill formula  
\[\text{scal}(h_\lambda)\geq\text{sec}_{g_\lambda}((X_1,0)\wedge(X_2,0))+\sum_{(i,j)\neq(1,2)}\text{sec}_{g_\lambda}((X_i,0)\wedge(X_j,0))+\sum_{k=1}^{n-1}\text{sec}_{g_\lambda}((X_k,0)\wedge Z).\]       

Since \((M\times \{y\},g)\) is totally geodesic,    
\[\text{sec}_{g_\lambda}((X_1,0)\wedge(X_2,0))=\text{sec}_{g}(\sigma_x)>C\]
and
\[\text{sec}_{g_\lambda}((X_i,0)\wedge(X_j,0))=\text{sec}_{g}(X_i\wedge X_j)\geq0.\]

Furthermore, using the basis \(\{(X_k,0),\ov{b}Z\}\) for the plane \((X_k,0)\wedge Z\) \[\text{sec}_{g_\lambda}((X_k,0)\wedge Z)=\frac{\left<R_{g}(X_k,\frac{a}{b}X)\frac{a}{b}X,X_k\right>_{g}+\left<R_{g_\lambda}((X_k,0),(0,\pth))(0,\pth),(X_k,0)\right>_{g_\lambda}}{\frac{a^2}{b^2}u^2+v_\lambda^2} \]
\[\geq-\left|\text{sec}_{g_\lambda}((X_k,0)\wedge(0,\pth))\right|=-\left|\ov{v_\lambda}\text{Hess}_{v_\lambda}(X_k,X_k)\right|.\]

For details on the sectional curvatures of a warped product, used in the last equality, see \cite{B} Section 9J.  Applying the definition of \(v_\lambda\) we have
\[\text{scal}(h_\lambda)\geq C-\sum_{k=1}^{n-1}\ep^2\left|\frac{3X_k(u)^2}{(u^2-\ep^2)^2}-\frac{\text{Hess}_u(X_k,X_k)}{u(u^2-\ep^2)}\right|>0.\]

So \(h_\lambda\), \(\lambda\in(0,1]\), is a continuous path of metrics with positive scalar curvature.  Each \(h_\lambda\) is identical to \(g\) on the orthogonal complement of \(X\), while 
\[|X|^2_{h_\lambda}=\frac{u^2v_\lambda^2}{u^2+v_\lambda^2}=\frac{\ep^2u^2}{\lambda^2(u^2-\ep^2)+\ep^2}.\]

Since \(X\) is a Killing vector field and \(|X|_{h_1}=\ep\) is constant, the integral curves of \(X\), which are the orbits of the \(S^1\) action, are geodesics in \(h_1\).  Furthermore \(|X|_{h_0}=u\), so \(h_0=g\).  Thus \(h=h_1\) and \(g\) are in the same path component of \(\mathfrak{M}_{\text{scal}>0}(M)\).

\end{proof}

In Sections 3 and 4 we find, for each manifold \(M\) in Theorems A and B, a sequence of metrics on manifolds diffeomorphic to \(M\) such that no two metrics yield the same value of \(s\).  The following lemma shows that these sequences complete the proof of Theorems A and B.  

  \begin{lem}\label{comps}
  Let \(M\) be a simply connected spin (4k-1)-manifold with vanishing rational Pontryagin classes.  Let \(\{M_i,g_i\}_i\) be a sequence of Riemannian manifolds diffeomorphic to \(M\) such that sec\((g_i)\geq0\), scal\((g_i)>0\), and the values \(s(M_i,g_i)\) are all distinct.  Then \(\mathfrak{M}_{\text{\normalfont sec}\geq0}(M)\) and \(\mathfrak{M}_{\text{\normalfont Ric}>0}(M)\) have infinitely many path components.   
  
  \end{lem}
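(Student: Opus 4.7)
The plan is to use the Kreck--Stolz \(s\) invariant as the obstruction. First I would transport each metric to \(M\) itself: choosing diffeomorphisms \(\phi_i \colon M \to M_i\), set \(\tilde g_i = \phi_i^* g_i\). Since \(M\) is simply connected its spin structure is unique, and since \(s\) is defined intrinsically via \(\eta\)-invariants and curvature integrals it is a diffeomorphism invariant; hence \(s(M,\tilde g_i) = s(M_i, g_i)\), the values remain pairwise distinct, and each \(\tilde g_i\) still has \(\text{sec}\geq 0\) and \(\text{scal}>0\).

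Now argue by contrapositive. If \(\mathfrak M_{\text{sec}\geq 0}(M)\) had only finitely many path components containing any \([\tilde g_i]\), pigeonhole would produce \(i\neq j\) with \([\tilde g_i] = [\tilde g_j]\) in that moduli space; lifting to the space of Riemannian metrics, there exist a diffeomorphism \(\psi\) of \(M\) and a continuous path \(g_t\) of \(\text{sec}\geq 0\) metrics from \(\tilde g_i\) to \(\psi^* \tilde g_j\). The goal is to deduce \(s(\tilde g_i) = s(\tilde g_j)\), contradicting the hypothesis.

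The \(\text{Ric}>0\) statement is direct: \(\text{Ric}>0\) forces \(\text{scal}>0\), so the analogous path automatically consists of positive-scalar-curvature metrics. The Kreck--Stolz invariance of \(s\) on path components of \(\mathfrak M_{\text{scal}>0}(M)\), recalled in the Preliminaries, then gives \(s(\tilde g_i) = s(\psi^* \tilde g_j) = s(\tilde g_j)\), the second equality again by diffeomorphism invariance.

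The main obstacle is the \(\text{sec}\geq 0\) case, since \(g_t\) only satisfies \(\text{scal}\geq 0\) and could pass through metrics with scalar curvature vanishing at some points, on which \(s\) is not a priori locally constant. I would handle this via a small conformal perturbation. Since \(M\) is closed and simply connected of dimension \(\geq 2\), no \(\text{sec}\geq 0\) metric on \(M\) can be scalar-flat: \(\text{scal}\equiv 0\) together with \(\text{sec}\geq 0\) would force \(\text{sec}\equiv 0\), hence \(M\) would be flat and therefore finitely covered by a torus by the Bieberbach theorem, contradicting \(\pi_1(M)=0\). Consequently, the conformal Laplacian of \(g_t\) has strictly positive first eigenvalue for every \(t\), and a continuously varying family of positive functions \(u_t\) (e.g.\ the normalized first eigenfunctions) can be chosen so that the rescaled path \(\tilde g_t = u_t^{4/(n-2)} g_t\) has \(\text{scal}(\tilde g_t)>0\) everywhere. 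Joining \(\tilde g_i\) to \(\tilde g_0\) and \(\tilde g_1\) to \(\psi^* \tilde g_j\) by short conformal paths inside their (positive Yamabe) conformal classes produces a continuous path of positive-scalar-curvature metrics from \(\tilde g_i\) to \(\psi^* \tilde g_j\); the Kreck--Stolz invariance then gives \(s(\tilde g_i) = s(\tilde g_j)\), completing the contradiction.
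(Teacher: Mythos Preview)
Your argument for \(\mathfrak{M}_{\text{sec}\geq0}(M)\) is correct and genuinely different from the paper's. The paper invokes the B\"ohm--Wilking theorem: each \(g_t\) in the \(\text{sec}\geq0\) path immediately acquires \(\text{Ric}>0\) under Ricci flow, producing a \(\text{scal}>0\) path after concatenating with the short-time flow at the endpoints. Your route through the conformal Laplacian is more elementary---it avoids the heavy Ricci-flow input and uses only that a closed simply connected manifold carries no flat metric, together with linearity of \(L_g\) (so the set \(\{u>0:L_gu>0\}\) is convex, which justifies your ``short conformal paths''). Both arrive at a \(\text{scal}>0\) path and then quote the Kreck--Stolz invariance of \(s\).

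There is, however, a genuine gap in your treatment of \(\mathfrak{M}_{\text{Ric}>0}(M)\). The hypotheses only give \(\text{sec}(\tilde g_i)\geq0\) and \(\text{scal}(\tilde g_i)>0\); nothing guarantees \(\text{Ric}(\tilde g_i)>0\), so you have not placed \emph{any} metric inside \(\mathfrak{M}_{\text{Ric}>0}(M)\), and your pigeonhole/contrapositive never gets off the ground. Your conformal perturbation cannot repair this, since conformal changes do not in general produce positive Ricci curvature. The paper handles this precisely via B\"ohm--Wilking: each \(\tilde g_i\) flows for a short time to a metric \(\hat g_i\) with \(\text{Ric}>0\), the flow is through \(\text{scal}>0\) metrics (so \(s(\hat g_i)=s(\tilde g_i)\) remain distinct), and then your easy observation that a \(\text{Ric}>0\) path is a \(\text{scal}>0\) path finishes the job. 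You need some such mechanism to manufacture the \(\text{Ric}>0\) representatives before the rest of your argument applies.
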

  
  \begin{proof}
We  pull back the metrics \(g_i\) to a sequence of metrics on \(M\).  By \cite{KS} Proposition 2.13,  \(s\) is preserved under pullbacks.  Since the values of \(s\) are distinct, these metrics lie in an infinite set of distinct path components of \(\mathfrak{M}_{\text{\normalfont scal}>0}(M)\).

 An argument as in \cite{DKT, BKS} shows the metrics lie in different path components of \(\mathfrak{M}_{\text{sec}\geq0}\) as well.  Suppose two of the metrics \(g_0,g_1\) (up to diffeomorphism) can be connected with a path \(g_t\) maintaining nonnegative sectional curvature.  Bohm and Wilking \cite{BW} showed that such metrics on a simply connected manifold immediately evolve to have positive Ricci curvature under the Ricci flow.  Thus the path \(g_t\) evolves to a path maintaining positive Ricci curvature, and thus positive scalar curvature.  \(g_0\) and \(g_1\) are connected to the new endpoints by their evolution under the Ricci flow which similarly maintains positive scalar curvature.  So \(g_0\) and \(g_1\) can be connected with a path maintaining positive scalar curvature, which leads to a contradiction.  Furthermore, \(g_0\) and \(g_1\) evolve under the Ricci flow to metrics of positive Ricci curvature in distinct components of \(\mathfrak{M}_{\text{\normalfont scal}>0}(M)\) and therefore of \(\mathfrak{M}_{\text{\normalfont Ric}>0}(M).\)   
\end{proof}
\section{\(S^3\) bundles over \(S^4\) and \(\f{C}P^2\)}
\bigskip
In this section we prove \tref{thm}, starting with the simplest case.  
\subsection{\(S^3\) bundles over \(S^4\)}\(S^3\) bundles over \(S^4\) are classified by elements of \(\pi_3(SO(4))=\f{Z}\oplus\f{Z}\).  We use the basis for \(\pi_3(SO(4))\) given by the maps \(\mu(q)(v)=qvq^{-1}\) and \(\nu(q)(v)=qv\).  Here \(v\in\ar{4}\) viewed as the  quaternions and \(q\in S^3\) viewed as the unit quaternions.  Let \(M_{m,n}\) be the bundle classified by \(m\mu+n\nu\in\pi_3(SO(4))\).  In \cite{GZ} it is shown that the \(SO(4)\) principal bundle of every \(S^3\) bundle over \(S^4\) admits an \(SO(4)\) invariant metric of nonnegative sectional curvature, and hence the sphere bundles do as well.  

Assume \(n\neq0\).  From the homotopy long exact sequence one sees that \(H^4(M_{m,n},\f{Z}_n)=\f{Z}_n\), so the rational Pontryagin classes of \(M_{m,n}\) vanish.  Let \(W_{m,n}\) be the associated disc bundle.  Then \(H^2(W_{m,n},\f{Z}_2)=H^2(S^4,\f{Z}_2)=0\) and hence \(W_{m,n}\) is a spin manifold.  \tref{propB} and \tref{ksinv} imply that  \(M_{m,n}\) has a metric \(g_{M_{m,n}}\) of nonnegative sectional and positive scalar curvature with \(s\) invariant given by \eqref{inv2}.  Crowley and Escher \cite{CE} computed the invariants 
\[p_1^2(W_{m,n})=\frac{4(n+2m)^2}{n}\] 
and 
\(\text{sign}(W_{m,n})=1\).
So 
\[s(M_{m,n},g_{M_{m,n}})=\frac{-(n+2m)^2+n}{2^5\cdot 7\cdot n}.\]

Corollary 1.6 of \cite{CE} shows that \(M_{m',n}\) and \(M_{m,n}\) are diffeomorphic if \(m'=m\) mod \(56n\).  So the manifolds in the sequence \(\{M_{m+56ni,n}\}_i\) are all diffeomorphic to \(M_{m,n}\).  Since \(n\) is constant in the sequence, the \(s\) invariant is a polynomial in \(i\).  It follows that there is an infinite subsequence of metrics with distinct \(s\) invariants.  \lref{comps} completes the proof of the first part of \tref{thm}.      

\begin{rem}

Comparison to the 7-dimensional homogeneous spaces in \cite{N} shows that \(M_{m,n}\) has the cohomology ring of such a space only when \(|n|=1\), \(2\) or \(10\).  The homogeneous candidates are \(S^7\), \(T_1S^4\) and the Berger space \(SO(5)/SO(3)\) with \(H^4=0, \f{Z}_2\) and \(\f{Z}_{10}\).      

\end{rem}

\subsection{\(S^3\) bundles over \(\f{C}P^2\)}
In \cite{GZ2} it is shown that every principal \(SO(4)\) bundle over \(\f{C}P^2\) with \(w_2\neq0\) admits an \(SO(4)\) invariant metric of nonnegative sectional curvature.  Such bundles are classified by two integers \(a,b\) describing the first Pontryagin and Euler classes \(p_1=(2a+2b+1)x^2\) and \(e=(a-b)x^2\), where \(x\) is the generator of \(H^*(\f{C}P^2, \f{Z})\).  Let \(\pi:S_{a,b}\to\f{C}P^2\) be the \(S^3\) bundle over \(\f{C}P^2\) with these characteristic classes.  If \(a\neq b\) then the Gysin sequence implies that \(H^4(S_{a,b},\f{Z})=\f{Z}_{|a-b|},\) so the rational Pontryagin classes vanish.

Let \(E^4\xrightarrow{} \f{C}P^2\) be the 4-plane bundle associated to \(S_{a,b}\)  and \(W_{a,b}\subset E^4\) the associated disc bundle with projection \(\rho:W_{a,b}\to\f{C}P^2\).  Then \(TW_{a,b}\cong\rho^*(E^4\oplus T\f{C}P^2)\) and \(w_2(TW_{a,b})=\rho^*(w_2(E^4)+w_2(T\f{C}P^2))=0.\)  So \(W_{a,b}\) is a spin manifold.  \tref{propB} and \tref{ksinv} imply that \(S_{a,b}\) has a metric \(g_{S_{a,b}}\) of nonnegative sectional and positive scalar curvature with \(s\) invariant given by \eqref{inv2}.  

It is shown in \cite{EZ} Proposition 4.3 that 

\[p_1^2(W_{a,b})=\ov{a-b}(2a+2b+4)^2\]
and
\[\text{sign}(W_{a,b})=\text{sgn}(a-b).\]
So

\[s(S_{a,b},g_{S_{a,b}})=-\frac{(a+b+2)^2}{2^5\cdot 7\cdot (a-b)}+\frac{\text{sgn}(a-b)}{2^5\cdot 7}.\]

Corollary 4.5 of \cite{EZ} implies that \(S_{a,b}\) and \(S_{a',b'}\) are diffeomorphic if \(a-b=a'-b'\) and \(a=a'\) mod \(\lambda=2^3\cdot 3\cdot 7\cdot |a-b|.\)  Thus the manifolds in the sequence \(\{S_{a+i\lambda,b+i\lambda}\}_{i}\) are all diffeomorphic to \(S_{a,b}\).  Since \(a-b\) is constant for the sequence, the \(s\) invariant is a polynomial in \(i\).  So there is an infinite subsequence of metrics in this sequence with distinct \(s\) invariants.  \lref{comps} completes the proof of the second part of \tref{thm}.  

\begin{rem}\label{hom}

(a) The only 7-dimensional homogeneous spaces with the same cohomology ring as any \(S_{a,b}\) are the families \(N^7_{k,l}\) and \(W^7_{k,l}\) described in the introduction, see \cite{N}.  The quantities \(|H^4(N^7_{k,l},\f{Z})|=l^2\) and \(|H^4(W^7_{k,l},\f{Z})|=k^2+l^2+kl\) are always equal to \(0\) or \(1\) mod \(3\), so if \(|a-b|=2\) mod \(3\), \(S_{a,b}\) does not have the homotopy type of a 7-dimensional homogeneous space.

(b)  By \cite{EZ} Proposition 6.7, \(S_{-1,a(a-1)}\) is diffeomorphic to the homogeneous Aloff-Wallach space \(W^7_{a,1-a}\).  There also exist infinitely many positively curved Eschenburg spaces and many other Aloff-Wallach spaces which are diffeomorphic to \(S^3\) bundles over \(\f{C}P^2\), see \cite{EZ}  Theorem 8.1.
 
\end{rem}

\section{\(S^1\) bundles over \(S^2\) bundles over \(\f{C}P^2\)}

\bigskip

Escher and Ziller \cite{EZ} defined two families of 7-manifolds as follows.  Let \(x\) be the generator of \(H^*(\f{C}P^2,\f{Z})\).  
Define \(p:{N}_t\to\f{C}P^2\) as the \(S^2\) bundle with Pontryagin and Stiefel-Whitney classes \(p_1(N_t)=(1-4t)x^2\) and \(w_2(N_t)\neq0\).  They showed that  \(N_t\) is diffeomorphic to the projectivization \(P(E_t)\) of the complex line bundle \(E_t\) over \(\f{C}P^2\) with Chern classes \(c_1(E_t)=x\) and \(c_2(E_t)=tx^2\).  Furthermore if \(P_t\) is the principal \(U(2)\) bundle corresponding to \(E_t\), \(N_t\) is diffeomorphic to \(P_t/T^2\), where \(T^2\subset U(2)\).  

Let \(y\) be the first Chern class of the dual of the tautological line bundle over \(P(E)\).  By the Leray-Hirsch theorem

\[H^*(N_t)=\f{Z}[x,y]/(x^3, y^2+xy+tx^2).\]
For simplicity, we denote \(p^*(x)\) again by \(x\).  Finally define the principal \(S^1\) bundle

 \[S^1\rightarrow M^t_{a,b}\xrightarrow{}N_t \indent\indent \text{with Euler class} \indent e=ax+(a+b)y\indent \text{and gcd}(a,b)=1.\]  Proposition 6.1 in \cite{EZ} shows that the bundle \(S^1\to M^t_{a,b}\to N_t\) is equivalent to \(T^2/S^1_{a,b}\to P_t/S^1_{a,b}\to P_t/T^2\cong N_t\) where \(S^1_{a,b}=\{\text{diag}(e^{ia\theta},e^{ib\theta})\}\subset U(2)\).  Since gcd\((a,b)=1\), the total space is simply connected, and from the Gysin sequence it follows that the cohomology ring of \(M^t_{a,b}\) is of the form required by the diffeomorphism classification of \cite{KS1} as long as \(0\neq |t(a+b)^2-ab|=|H^4(M_{a,b}^t,\f{Z})|\).

Next define \(\bar{p}:\bar{N}_t\to\f{C}P^2\) as the \(S^2\) bundle with Pontryagin and Stiefel-Whitney classes \(p_1(\bar{N}_t)=4tx^2\) and \(w_2(\bar{N}_t)=0\).   
In this case \(\bar{N}_t\) is diffeomorphic to the projectivization \(P(\bar{E}_t)\) of the complex line bundle \(\bar{E}_t\) over \(\f{C}P^2\) with Chern classes \(c_1(\bar{E}_t)=2x\) and \(c_2(\bar{E}_t)=(1-t)x^2\).  If \(\bar{P}_t\) is the principal \(U(2)\) bundle associated to \(\bar{E}_t\),  \(\bar{N}_t\) is diffeomorphic to \(\bar{P}_t/T^2\).  Let \({y}\) be the first Chern class of the dual of the tautological line bundle over \(P(\bar{E}_t)\).  Then

\[H^*(\bar{N}_t)=\f{Z}[x,y]/(x^3, {y}^2+2xy+(1-t)x^2).\]
Again we denote \(\bar{p}^*(x)\) by \(x\).  Finally define the principal \(S^1\) bundle

 \[S^1\rightarrow \bar{M}^t_{a,b}\xrightarrow{}\bar{N}_t \indent\indent \text{with Euler class} \indent e=(a+b)x+b{y}\indent \text{and gcd}(a,b)=1.\]  
 
 In this case, one sees that \(\pi_1(\bar{P}_t)=\f{Z}_2\) and \(\bar{P}_t\) has a two-fold cover \(\bar{P}_t'\) which is a principal \(S^1\times S^3\) bundle over \(\f{C}P^2\).  Furthermore \(\bar{N}_t\cong \bar{P}_t'/T^2\), with \(T^2=\{(e^{i\theta},e^{i\phi})\}\subset S^1\times S^3\).  Proposition 7.5 in \cite{EZ} shows that the bundle defining \(\bar{M}^t_{a,b}\) is equivalent to \(T^2/S^1_{-b,a}\to \bar{P}_t'/S^1_{-b,a}\to\bar{P}_t'/T^2\) where \(S^1_{-b,a}=\{(e^{-ib\theta},e^{ia\theta})\}\).  As before, \(\bar{M}^t_{a,2b}\) is simply connected and has the cohomology necessary for the diffeomorphism classification of \cite{KS1}, since \(a\) is odd and so \(|H^4(M^t_{a,2b})|=|a^2-4tb^2|\neq0\).

 Escher and Ziller showed that \(M^t_{a,b}\) and \(\bar{M}^{2t}_{a,b}\) admit \(S^1\) invariant metrics \(g^t_{a,b}\) and \(\bar{g}^{2t}_{a,b}\) respectively with nonnegative sectional curvature.  In order to apply \tref{propB} and \tref{path} we prove the following lemma.      
 
 \begin{lem}\label{scal}
At each point \(x\) of \((M^t_{a,b},g^{t}_{a,b})\) and \((\bar{M}^{2t}_{a,b},\bar{g}^{2t}_{a,b})\) there exists a 2-plane \(\sigma_x\) orthogonal to the \(S^1\) orbit with sec\((\sigma_x)>0\).  
\end{lem}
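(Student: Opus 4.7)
The plan is to exploit the tower of Riemannian submersions $P_t\to M^t_{a,b}\to N_t$ and locate $\sigma_x$ as the horizontal lift of a 2-plane tangent to the $S^2$-fiber of $N_t\to\f{C}P^2$. Recall from \cite{EZ} that $g^t_{a,b}$ is the quotient by the free $S^1_{a,b}$-action of a right-$U(2)$-invariant metric $g_{P_t}$ on the principal bundle $P_t\to\f{C}P^2$ with $\sec(g_{P_t})\geq 0$. Because $S^1_{a,b}$ is normal in $T^2\subset U(2)$, the quotient group $S^1=T^2/S^1_{a,b}$ acts freely and isometrically on $M^t_{a,b}$---this is precisely the $S^1$-action of the lemma---and the further quotient $N_t=P_t/T^2$ makes $\pi:M^t_{a,b}\to N_t$ a Riemannian submersion.

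Given $x\in M^t_{a,b}$ and $y=\pi(x)$, let $F\cong S^2$ denote the fiber of $N_t\to\f{C}P^2$ through $y$, and take $\sigma_x\subset T_xM^t_{a,b}$ to be the horizontal lift of $T_yF$ via $\pi$. By construction $\sigma_x$ is orthogonal to the $S^1$-orbit, and the O'Neill formula yields $\sec_{g^t_{a,b}}(\sigma_x)\geq\sec_{N_t}(T_yF)$, so the task reduces to showing $\sec_{N_t}(T_yF)>0$.

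For this, I choose $p\in P_t$ with $\pi_2(p)=y$, where $\pi_2:P_t\to N_t$, and set $O=U(2)\cdot p$, the fiber of $P_t\to\f{C}P^2$ through $p$. The metric $g_{P_t}$ used in \cite{EZ} is of connection type, so each $U(2)$-orbit is totally geodesic in $P_t$. Since $O=\pi_2^{-1}(F)$ is $\pi_2$-saturated and $T_p(T^2\cdot p)\subset T_pO$, the horizontal lift of any $v\in T_yF$ lies in $T_pO$; the resulting horizontal geodesic in $P_t$ then stays inside $O$ and projects to a geodesic in $F$. This shows that $F$ is totally geodesic in $N_t$, so $\sec_{N_t}(T_yF)=\sec_F(T_yF)$. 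Finally, the induced metric on $F\cong U(2)/T^2\cong\f{C}P^1$ is $U(2)$-invariant, and since the $U(2)$-action on $\f{C}P^1$ factors through an effective $SO(3)$-action, this metric must be round, giving $\sec_F>0$.

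For $\bar{M}^{2t}_{a,b}$ I would run the same argument after replacing $P_t$ by the double cover $\bar{P}_{2t}'$, $U(2)$ by $S^1\times S^3$, and $S^1_{a,b}\subset T^2\subset U(2)$ by $S^1_{-b,a}\subset T^2\subset S^1\times S^3$. The quotient $(S^1\times S^3)/T^2\cong S^2$ again carries a round metric, as $S^1\times S^3$ acts on it effectively through $SO(3)=S^3/Z(S^3)$. The main technical obstacle I foresee is verifying that the specific metrics of \cite{EZ} really do keep the structure-group orbits totally geodesic; should a Cheeger deformation be involved that spoils this, one can instead track the second-fundamental-form terms in the Gauss equation together with the O'Neill tensors of the two successive submersions to recover $\sec_{N_t}(T_yF)>0$ directly.
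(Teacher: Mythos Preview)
Your strategy hinges on the assertion that the metric \(g_{P_t}\) is ``of connection type'' with totally geodesic \(U(2)\)-orbits, and you rightly flag this as the main obstacle.  Unfortunately it fails for the metrics actually used.  As the paper makes explicit, \(g_{P_t}\) is a Grove--Ziller cohomogeneity one metric: on each disc half \(D_\pm=G\times_{K_\pm}D^2\) with \(G=U(2)\times S^3\), it comes from the left-invariant metric \(\overline{Q}_a=Q|_{\mathfrak{m}\oplus\mathfrak{h}}+aQ|_{\mathfrak{p}}\) on \(G\), where \(\mathfrak{p}=\mathrm{span}\{(p_- i,i)\}\) (resp.\ the analogous space for \(K_+\)) is the Cheeger direction.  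Since \(\mathfrak{p}\) has nonzero components in \emph{both} \(\mathfrak{u}(2)\) and \(\mathfrak{su}(2)\), \(\overline{Q}_a\) is not a product metric and the factor \(U(2)\times\{e\}\) is not totally geodesic: for instance \(\overline{Q}_a\bigl([(j,0),(k,0)],(0,i)\bigr)=(a-1)\dfrac{p_-\,Q_2(i,i)}{|P|_Q^2}Q_1(i,i)\neq0\), so \(\nabla_{(j,0)}(k,0)\notin\mathfrak{u}(2)\oplus0\).  Hence the \(U(2)\)-orbits in \(P_t\) are not totally geodesic, the \(S^2\)-fibres of \(N_t\to\mathbb{C}P^2\) need not be totally geodesic, and there is no reason for their induced metric to be \(U(2)\)-homogeneous (indeed \(U(2)\) does not act on \(N_t=P_t/T^2\), as \(T^2\) is not normal in \(U(2)\)).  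Your fallback of ``tracking second fundamental forms and O'Neill tensors'' would amount to a substantial new computation, not carried out here.

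The paper avoids all of this by choosing a different \(2\)-plane.  Working in \(G\times D^2\), it takes \(dL_y(0,j)\wedge dL_y(0,k)\), i.e.\ a plane in the \(S^3\) factor rather than the \(U(2)\) factor.  These vectors are orthogonal to both \(dR_y(\mathfrak{u}(2)\oplus0)\) (hence to the \(T^2\)-orbit) and to \(dL_y\mathfrak{k}\), so they descend to a plane \(\sigma_x\) orthogonal to the \(S^1\)-orbit in \(M^t_{a,b}\).  Positivity then comes not from any totally-geodesic property but from the O'Neill \(A\)-tensor of the \(K_\pm\)-quotient: \([(0,j),(0,k)]=(0,2i)\) has nonzero projection onto \(\mathfrak{k}=\mathrm{span}\{(p_- i,i)\}\), so \(\sec_g(\sigma_x)\geq\tfrac{3}{4}\,|dL_y(0,2i)^V|^2>0\).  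This is a short explicit computation that bypasses the structural issues your approach runs into.
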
 
\begin{proof}
 
 The metrics are constructed using cohomogeneity one actions, and we first recall the general description of such manifolds.  We consider actions of a compact Lie group \(G\) on a manifold \(M\) such that the orbit space is the interval \([-1,1]\).  Let \(\pi:M\to[-1,1]\) be the projection onto the orbit space.   Let \(H\subset G\) be the isotropy subgroup of a point in the principal orbit \(\pi^{-1}(0)\) and \(K^\pm\)  the isotropy groups of points in the singular orbits \(\pi^{-1}(\pm 1)\).  The slice theorem implies that \(\pi^{-1}([-1,0])\) is equivariantly diffeomorphic to the disc bundle \(D_-=G\times_{K_-}D^{d_-}\) where \(K_-\) acts linearly on \(D^{d_-}\)  and \(K_-/H\) is diffeomorphic to the sphere \(S^{d_--1}\). Here \(d_-\) is the codimension of the singular orbit.  Furthermore, the boundary of \(D_-\) is \(G/H\), diffeomorphic to the principal orbit \(\pi^{-1}(0)\).  \(D_+\) is described equivalently with the same boundary.  Then \(M\) is diffeomorphic to the union \(D_-\cup_{G/H}D_+\).  Conversely, given Lie groups \(H\subset K_\pm\subset G\) with \(K_\pm/H\cong S^{d_\pm-1},\) the action of \(K_\pm\) on \(S^{d_\pm-1}\) extends to a linear action on \(D^{d_\pm}.\)  We can then define \(M=D_-\cup_{G/H}D_+\) as above, and \(M\) will admit a cohomogeneity one action by \(G\) with isotropy groups \(H\subset K_\pm\).     

If \(d_\pm=2\), it is shown in \cite{GZ} that one can define a metric with sec\(\ \geq0\) on \(M\) as follows.    Let \(\mathfrak{g,k,h}\) be the Lie algebras of \(G,K_-,H\) respectively and \(Q\) a biinvariant metric on \(G\).  Choose \(\mathfrak{g}=\mathfrak{m}\oplus\mathfrak{k}\) and \(\mathfrak{k}=\mathfrak{h}\oplus\mathfrak{p}\) to be \(Q\)-orthogonal decompositions and \(\overline{Q}_a\) the left invariant metric on \(G\) defined by 
\[\overline{Q}_a=Q|_{\mathfrak{m}\oplus\mathfrak{h}}+aQ|_\mathfrak{p}.\]  
Let \(f(r)\) be a concave function with \(f(0)=0\), \(f'(0)=1\) and \( f(r)=\sqrt{\frac{al^2}{a-1}}\) for  \(r\) near the boundary of \(D^2\), where \(2\pi l\) is the length of \(K_-/H\) with respect to \(Q\).  In \cite{GZ} it is shown that the metric
\[\overline{g}=\overline{Q}_a+dr^2+f(r)d\theta^2\]    
 on \(G\times D^2\) has nonnegative curvature as long as \(1< a\leq4/3\) and hence induces a \(G\) invariant metric \(g_-\) of nonnegative curvature on the quotient \(D_-\).  Furthermore  \(g_-\) is a product near the boundary \(G/H\), with the induced metric on \(G/H\) the same as that induced by \(Q\).  A similar metric can be put on \(D_+\), and because of the boundary condition the two can be glued to form a smooth \(G\) invariant metric \(g\) of nonnegative sectional curvature on \(D_-\cup_{G/H}D_+\cong M\).    

In order to prove the claim, we need to describe the manifolds and metrics in a slightly different way than in \cite{GZ2}.  For \(p_-,p_+,q\in\f{Z}\), \(p_+\) odd and \(p_-\neq q\) mod 2, let \(P_{p_-,p_+,q}\) be the cohomogeneity one manifold defined by the following Lie groups:
\[G=U(2)\times S^3\]
\[H=\f{Z}_4=\left<\left( \pm i^q\left[\begin{array}{cc}0&1\\-1&0\end{array}\right],j\right)\right>\]
 \[K_-=H\cdot\left\{\left(\text{diag}\left(e^{ip_-\theta},e^{-ip_-\theta}\right),e^{i\theta}\right)\right\}\]
 \[K_+=\left\{\left(e^{iq\theta}R(p_+\theta),e^{j\theta}\right)\right\}\]
where \(R(\phi)\) represents a \(2\times2\) rotation matrix and the sign in \(H\) is chosen to make \(H\) a subgroup of \(K_+\).  One easily sees that \(U(2)\) acts freely on \(P_{p_+,p_-,q}\).  Since \(U(2)\) commutes with \(S^3\), the quotient \(P_{p_+,p_-,q}/U(2)\) admits an action by \(S^3\) which is cohomogeneity one with the same isotropy groups as the action of \(S^3\) on \(\f{C}P^2\) (see \cite{GZ2} Figure 2.2.)  Thus \(P_{p_+,p_-,q}\) is the total space of a principal \(U(2)\) bundle over \(\f{C}P^2\).  

Suppose \(P\) is a principal \(U(2)\) bundle over \(\f{C}P^2\).  From the spectral sequence of the fibration \(U(2)\to P\to \f{C}P^2\), one sees that \(H^2(P,\f{Z})\cong\f{Z}_{|c_1|}\), where \(c_1\) denotes the coefficient of \(x\) in the first Chern class \(c_1(P)\in H^2(\f{C}P^2,\f{Z})\).  Applying the Seifert-Van Kampen theorem to \(P_{p_-,p_+,q}=D_-\cup D_+\), one shows that \(\pi_1(P_{p_-,p_+,q})=\f{Z}_q\).  By the universal coefficient theorem we conclude that \(H^2(P_{p_+,p_-,q},\f{Z})=\f{Z}_q\) and hence \(c_1(P_{p_+,p_-,q})=qx\).      

Let \(Z\) be the center of \(U(2)\).  Since \(U(2)/Z\cong SO(3)\), \(P/Z\) is a principal \(SO(3)\) bundle over \(\f{C}P^2\) with first Pontryagin class \(p_1(P/Z)=c_1(P)^2-4c_2(P)\), see \cite{EZ}, 2.5, 2.6.  In particular, \(P_{p_-,p_+,q}/Z\) admits a cohomogeneity one action by \(SO(3)\times S^3\) and one easily shows that the isotropy groups are 
\[H=\f{Z}_4=\left<\left(R_{1,3}(\pi),j\right)\right>\] 
\[K_-=H\cdot\left\{\left(R_{2,3}(2p_-\theta),e^{i\theta}\right)\right\}\]
\[K_+=\left\{\left(R_{1,3}(2p_+\theta),e^{j\theta}\right)\right\}.\]
Here \(R_{n,m}\in SO(3)\) is a rotation in the \(n,m\) plane of \(\ar{3}\).  By \cite{GZ2} Theorem 4.7, this bundle has first Pontryagin class \(p_1(P_{p_-,p_+,q}/Z)=(p_+^2-p_-^2)x^2\).  It follows that \(c_2(P_{2t})=\ov{4}(q^2-p_+^2+p_-^2)x^2\).

The description of the action on \(P_{p_-,p_+,q}\) has \(d_\pm=2\), so we can construct a \(U(2)\)-invariant metric \(g\) with sec\(\ \geq0\) as above.  We check that \(g\) has a 2-plane with sec\(\ >0\) orthogonal to the orbit of \(T^2\subset U(2)\) at each point.  We do this on each half \(D_\pm=G\times_{K_\pm}D^2\) separately.  By the O'Neill formula it is necessary to find such a 2-plane orthogonal to the orbit of \(T^2\times K_\pm\) at each point of \(G\times D^2\).  For \(D_-\) we have \(\mathfrak{g}=\mathfrak{u}(2)\oplus\mathfrak{su}(2)\), \(\mathfrak{k}=\mathfrak{p}=\text{span}\{(p_-i,i)\}\) and \(\mathfrak{h}=\{0\}\).  Here \(\{i,j,k\}\) is the standard basis of \(\mathfrak{su}(2)\) and \(\{l,i,j,k\}\) is the standard basis of \(\mathfrak{u}(2)\) with  \(l\) the generator of the center. 

 Since \(T^2\) and \(K_-\) act on \(G\) on the left and right respectively, the tangent space to the orbit at each point \((y,z)\in G\times D^2\) is contained in \[dL_y(\mathfrak{k})+dR_y(\mathfrak{u}(2)\oplus\{0\})+T_zD^2.\]   Here \(L_y\) and \(R_y\) designate left and right translation on \(G\). Since \((0,j)\) and \((0,k)\) are orthogonal to \(\mathfrak{k}\) and \(\mathfrak{u}(2)\oplus\{0\}\) with respect to the left invariant metric \(\overline{Q}_a\) and \(\mathfrak{u}(2)\oplus\{0\}\) is Ad-invariant, \(dL_y(0,j)\) and \(dL_y(0,k)\) are orthogonal to the orbit of \(T^2\times K_\pm\).  Choose \(\tau_{[y,z]}\) to be the image of \(dL_y(0,j)\wedge dL_y(0,k)\).
 By the O'Neill formula
\[\text{sec}_g(\tau_{[y,z]})\geq\frac{3}{4}|[dL_y(0,j),dL_y(0,k)]^V|_{\bar{g}}^2\geq{3}|dL_y(0,i)^V|_{\bar{g}}^2>0\]
where \(dL_y(0,i)^V\) is the projection of \(dL_y(0,i)\) onto \(dL_y(\mathfrak{k})\).  The same argument can be made on \(D_+\) using \(dL_y(0,i)\wedge dL_y(0,k)\).  

To summarize,  \(P_{p_-,p_+,q}\) is the \(U(2)\) principal bundle over \(\f{C}P^2\) with Chern classes \(c_1=qx\) and \(c_2=\ov{4}(q^2-p_+^2+p_-^2)x^2\) and it admits a \(U(2)\) invariant metric \(g\) and a 2-plane \(\tau_{\bar{x}}\) at each point \(\bar{x}\in P_{p_-,p_+,q}\) with \(\tau_{\bar{x}}\perp T^2\cdot \bar{x}\) and sec\(g(\tau_{\bar{x}})>0\).  In particular, \(P_{2t,1-2t,1}=P_t\) and \(P_{2t-1,2t+1,2}=\bar{P}_{2t}.\)  The metric \(g_{a,b}^t\) is defined such that \(g\) and \(g_{a,b}^t\) make \(P_t\to P_t/S^1_{a,b}\) into a Riemannian submersion.  Let \(g'\) be the locally isometric lift of \(g\) to the universal cover \(\bar{P}'_{2t}\) of \(\bar{P}_{2t}\).  Note that the \(T^2\subset U(2)\) action on \(\bar{P}_{2t}\) lifts to the \(T^2\subset S^1\times S^3\) action on \(\bar{P}'_{2t}\). \(\bar{g}_{a,b}^{2t}\) is defined such that \(g'\) and \(\bar{g}_{a,b}^{2t}\) make \(\bar{P}_{2t}'\to\bar{P}_{2t}'/S^1_{-b,a}\) into a Riemannian submersion.    

On each manifold, the image \(\sigma_x\) of \(\tau_{\bar{x}}\) under the \(S^1\) quotient will be orthogonal to the orbits of \(T^2/S^1\).  Using the O'Neill formula once more it follows that \(\text{sec}(\sigma_x)>0\) with respect to \(g^t_{a,b}\) and \(\bar{g}^{2t}_{a,b}\).  We  note that these metrics are invariant under the centralizer of \(S^1\), which is isomorphic to \(S^1\times S^3\) in each case.  The groups acting effectively by isometries are \(S^1\times SO(3)\) and \(U(2)\) respectively.

  \end{proof}
\lref{scal} yields the metrics required to calculate the \(s\) invariant for the two families of \(S^1\) bundles.

\subsection*{\(S^1\) bundles over spin \(S^2\) bundles}
 Let \(\pi:\bar{E}^3\to\f{C}P^2\) be the 3-plane bundle associated to \(S^2\to \bar{N}_t\xrightarrow{\bar{p}} \f{C}P^2\) and \(i\) the inclusion \(i:\bar{N}_t\to \bar{E}^3\).  Then \(T\bar{N}_t\) and the normal bundle \(V\) to \(\bar{N}_t\) span \(i^*(T\bar{E}^3)\).  Since \(T\bar{E}^3\cong \pi^*(\bar{E}^3\oplus T\f{C}P^2)\) and \(V\) is trivial we have
 \[w_2(T\bar{N}_t)=\bar{p}^*(w_2(\bar{E}^3)+w_2(\f{C}P^2))=x.\] 
 
   Next let \(E^2\) be the 2-plane bundle associated to \(\bar{M}^t_{a,b}\) and \(\bar{W}^t_{a,b}\subset E^2\) the disc bundle with projection \(\sigma:\bar{W}^t_{a,b}\to\bar{N}_t\).  We have the bundle isomorphism \(T\bar{W}^t_{a,b}\cong \sigma^*(E^2\oplus T\bar{N}_t)\)  and second Stiefel Whitney class \[w_2(T\bar{W}^t_{a,b})=w_2(E^{2})+w_2(T\bar{N}_t)=e(\bar{M}^t_{a,b})+w_2(T\bar{N}_t)=(a+b+1)x+by \ \ \text{ mod }\ \ 2.\] 
Here the notation \(\sigma^*\) is repressed since it is an isomorphism on cohomology.  Thus when \(b\) is even, \(a\) is odd, and \(\bar{W}^t_{a,b}\) is a spin manifold.  From the Gysin sequence one sees that \(H^4(\bar{M}^t_{a,b},\f{Z})\) is torsion so all the rational Pontryagin classes vanish.

   Therefore for \(t\) and \(b\) even, using \lref{scal} \((\bar{M}_{a,b}^{t},\bar{g}^{t}_{a,b})\) satisfies the hypotheses of \tref{propB} and \tref{ksinv}, and \(s(\bar{M}_{a,b}^t,\bar{g}^t_{a,b})\) is given by \eqref{inv2}.   

  In \cite{EZ} it was shown that for \(\bar{W}_{a,b}^t\) we have
\[p_1^2=b\left(-\frac{(3+4t)^2}{a^2-b^2t}+6+8t+3a^2+tb^2\right)\]
and \[\text{sign}(\bar{W}^t_{a,b})=\left\{\begin{array}{cc}0&\text{ if \(a^2-tb^2>0\)} \\ 2&\text{  if \(a^2-tb^2<0\) and \(b(1+t)>0\)} \\ -2&\text{ if \(a^2-tb^2<0\) and \(b(1+t)<0\)}\end{array}\right..\]
 Thus for \(t\) and \(b\) even
\begin{equation}\label{inveq}
 s(\bar{M}_{a,b}^t,\bar{g}_{a,b}^t)=\frac{b}{2^7\cdot7}\left(\frac{(3+4t)^2}{a^2-b^2t}-6-8t-3a^2-tb^2\right)+\frac{1}{2^5\cdot 7}\text{sign}(\bar{W}_{a,b}^t).
 \end{equation}
 
 When \(b\) is even, \(\bar{M}^{t}_{a,b}\) has the cohomology appropriate to calculate the Kreck-Stolz diffeomorphism invariants \(s_1,s_2,s_3\in\f{Q}/\f{Z}\)  \cite{KS1}.  These invariants are calculated in \cite{EZ}:   

\bigskip

\indent \(\displaystyle s_1(\bar{M}_{a,b}^t)=s(\bar{M}^t_{a,b},g^t_{a,b})\ \text{ mod }\f{Z}\)

\smallskip

\indent \( \displaystyle s_2(\bar{M}_{a,b}^t)=-\frac{1}{2^4\cdot 3} (b(n^2+tm^2)-2anm)\)
\[ 
-\frac{1}{2^4\cdot 3\cdot (a^2-tb^2)} (4nm(an^2+atm^2+2tbnm)-(3+4t-2n^2-2tm^2)(bn^2+btm^2+2anm))\]

\smallskip 

and

\smallskip

\indent \(\displaystyle s_3(\bar{M}_{a,b}^t)=-\frac{1}{2^2\cdot3}(b(n^2+tm^2)-2anm)\)\[-\frac{1}{2^3\cdot3\cdot (a^2-tb^2)}(16nm(an^2+atm^2+2tbnm)-(3+4t-8n^2-8tm^2)(bn^2+btm^2+2anm))\]
where \(m,n\) are such that \(ma+nb=1\).  

\bigskip

Now set \(r=a^2-8tb^2\), \(\lambda=2^7\cdot 3\cdot 7r\) and choose \(m,n\) such that \(ma+2nb=1\).  We then define
\[a_k=a+16b^2\lambda k,\ b_k=b,\ t_k=t+4a\lambda k+32b^2\lambda^2k^2,\ m_k=m,\ n_k=n-8b\lambda mk.\]

We see that \(a_k^2-8t_kb_k^2=r,\) \(m_ka_k+2n_kb_k=1\), and each of \(a_k,b_k,m_k,n_k,t_k\) is equal to the corresponding \(a,b,m,n,t\) mod \(\lambda\).  When \(r<0\) we have \(t,t_k>0\), so \(2b_k(1+2t_k)\) has the same sign as \(2b(1+2t)\).  It follows that sign\((\bar{W}^{2t_k}_{a_k,2b_k})=\)sign\((\bar{W}^{2t}_{a,2b})\).  This is enough to ensure the numerators of \(s_i(M^{2t}_{a,2b})\) and \(s_i(M^{2t_k}_{a_k,2b_k})\) are equal modulo the denominators so \(s_i(M^{2t}_{a,2b})-s_i(M^{2t_k}_{a_k,2b_k})\in\f{Z}\).  Thus the invariants \(s_i\in\f{Q}/\f{Z}\) and \(|H^4(M,\f{Z})|\) are equal and \(M^{2t_k}_{a_k,2b_k}\) is diffeomorphic to \(M^{2t}_{a,2b}\) by \cite{KS1} Theorem 3.1.  Since \(a_k^2-8t_kb_k^2\) and sign\((\bar{W}^{2t_k}_{a_k,2b_k})\) are constant for the sequence \(\{M^{2t_k}_{a_k,2b_k}\}_k\), the \(s\) invariant is a polynomial in \(k\), and there is an infinite subsequence of metrics with distinct \(s\) invariants.  \lref{comps} completes the proof of \tref{cthm} part (b).

\subsection*{\(S^1\) bundles over non-spin \(S^2\) bundles}
Let \(\pi:E^3\to\f{C}P^2\) be the 3-plane bundle associated to  \(p:N_t\to\f{C}P^2\).  By the bundle isomorphism of the previous section we have
 \[w_2(TN_t)=p^*(w_2(E^3)+w_2(\f{C}P^2))=p^*(2x)=0\text{ mod }2.\]
 Thus we can give \(M_{a,b}^t\) the spin structure induced from the bundle isomorphism \(TM_{a,b}^t\cong\rho^*TN_t\oplus V'\), where \(V'\) is the bundle generated by the \(S^1\) action field and \(\rho:M^t_{a,b}\to N_t\).  From the Gysin sequence one sees that \(H^4(M^t_{a,b},\f{Z})=\f{Z}_{|t(a+b)^2-ab|}\), so the rational Pontryagin classes vanish when \(t(a+b)^2-ab\neq0\).  
 
 By \lref{scal} \(g_{a,b}^t\) satisfies the conditions of \tref{path}.  It follows that \(s(M^t_{a,b},g^t_{a,b})=s(M^t_{a,b},h)\) for an \(S^1\) invariant metric \(h\) with geodesic fibers.  Then the circle bundle \(M^t_{a,b}\to N_t\) and \(h\) satisfy the hypotheses of \tref{ns} and \(s(M^t_{a,b},g^t_{a,b})\) is given by \eqref{nsinv2}.      
 
In \cite{EZ} the terms \(p_1^2,\) \(p_1e^2\) and \(e^4\) are calculated for \(W^t_{a,b}\) and we have
 \[s(M_{a,b}^t,g_{a,b}^t)=\frac{(a+b)(1-t)^2}{2^3\cdot 7\cdot(t(a+b)^2-ab)}+\ov{2^5\cdot 3\cdot 7}(-3ab+(1-t)(8+(a+b)^2))+\ov{2^5\cdot 7}\text{sign}(W_{a,b}^t)\]

where 
 \[\text{sign}(W_{a,b}^t)=\left\{\begin{array}{cc}0&\text{ if \(ab-t(a+b)^2<0\)} \\ 2&\text{  if \(ab-t(a+b)^2>0\) and \(a+b>0\)} \\ -2&\text{ if \(ab-t(a+b)^2>0\) and \(a+b<0\)}\end{array}\right..\]
  
 When \(t(a+b)^2\neq ab\), \(M^t_{a,b}\) also has the cohomology ring necessary to define the diffeomorphism invariants \(s_i\).  They are calculated in \cite{EZ} Proposition 5.2. Just as for \(\bar{M}^t_{a,b}\) they are given by rational functions with numerators depending on \(a,b,m,n,t\) and sign(\(W^t_{a,b}\)), where \(m,n\) are such that \(ma+nb=1\).  The denominators divide \(2^5\cdot 3\cdot 7\cdot |t(a+b)^2-ab|\).  As these are the only relevant details, we omit the equations for brevity.      
     
Let \(r=t(a+b)^2-ab\neq0\), \(\lambda=2^5\cdot3\cdot7r\) and choose \(m,n\) such that \(ma+nb=1\).  We set 
\[a_k=a+(a+b)^2\lambda k,\ b_k=b-(a+b)^2\lambda k,\ t_k=t-(a-b)\lambda k-(a+b)^2\lambda^2k^2,\]\[m_k=m+(n-m)(a+b)\lambda k,\ n_k=n+(n-m)(a+b)\lambda k.\]

One checks that \(t_k(a_k+b_k)^2-a_kb_k=r\), \(m_ka_k+n_kb_k=1\), \(a_k+b_k=a+b\) and each of \(a_k,b_k,m_k,n_k,t_k\) is equal to the corresponding \(a,b,m,n,t\) mod \(\lambda\).  It follows that \(M^{t_k}_{a_k,b_k}\) is diffeomorphic to \(M^t_{a,b}\) while \(s(M^{t_k}_{a_k,b_k},g^{t_k}_{a_k,b_k})\) is a polynomial in \(k\).  This completes the proof of \tref{cthm}.  
   
\begin{rem}

(a) One easily sees that \(W^7_{a,b}\) is diffeomorphic to only finitely many other \(W^7_{k,l}\), and no other homogeneous spaces.  By \cite{ST} Proposition 1.1 the space of \(G\) invariant metrics with nonnegative sectional curvature on a homogeneous space \(G/H\)  is connected.  Thus \(\mathfrak{M}_{\text{sec}\geq0}(W^7_{a,b})\) has infinitely many components by the corollary, but only finitely many of them contain homogeneous metrics.  Each of those in turn contains a positively curved metric, except in the case of \(W_{1,0}.\)  There are examples due to \cite{KS} where one has two components containing metrics with sec \(>0\).

(b) One sees from the diffeomorphism invariants that no two of the Eschenburg spaces \(F_{a,b}\) are diffeomorphic, so we cannot use this set of metrics to prove that any \(\mathfrak{M}_{\text{sec}>0}(F_{a,b})\) is not path connected.  

(c) We saw in the proof \lref{scal} that \(S^1\times SO(3)\) and \(U(2)\) respectively act by isometries on \(g^{t}_{a,b}\) and \(\bar{g}^{2t}_{a,b}\), and we suspect each is the full identity component of the isometry group. 

(d) The same argument as in \rref{hom} shows that \(M^{t}_{a,b}\) and \(\bar{M}^{2t}_{a,b}\) do not have the homotopy type of a 7-dimensional homogeneous space if \(|t(a+b)^2-ab|\) or \(|a^2-2tb^2|=2\) mod \(3\).

\end{rem}
 \providecommand{\bysame}{\leavevmode\hbox
to3em{\hrulefill}\thinspace}

\end{document}